\newtheorem{Theorem}{Theorem}[section]
\newtheorem{Proposition}[Theorem]{Proposition}
\newtheorem{Lemma}[Theorem]{Lemma}
\newcommand{\R}{{\mathbb R}}
\newcommand{\C}{{\mathbb C}}
\newcommand{\N}{{\mathbb N}}
\newcommand{\V}{{\mathbb V}}
\journal{Journal of Differential Equations}
\begin{document}

\begin{frontmatter}



\title{Density of Schr\"odinger Weyl-Titchmarsh m functions on Herglotz functions}


\author{Injo Hur}

\address{Mathematics Department, University of Oklahoma, Norman, OK, USA, 73019\\
 and Mathematics Department, Sogang University, Seoul, Republic of Korea, 04107}

\begin{abstract}
We show that the Herglotz functions that arise as Weyl-Titchmarsh $m$ functions of one-dimensional Schr\"odinger operators are dense in the space of all Herglotz functions with respect to uniform convergence on compact subsets of the upper half plane. This result is obtained as an application of de Branges theory of canonical systems. 
\end{abstract}

\begin{keyword}
Canonical system \sep Herglotz function \sep Schr\"odinger operator \sep Weyl-Titchmarsh $m$ function
\MSC[2010] Primary 34L40 34A55 \sep  Secondary 81Q10

\end{keyword}

\end{frontmatter}


\section{Introduction}\label{secintro}





We discuss the very natural question of whether any arbitrary Herglotz function can be approximated by Weyl-Titchmarsh $m$ functions \cite{Titch,Weyl} of Schr\"odinger operators  $S=-d^2/dx^2+V(x)$. These $m$ functions, called \textit{Schr\"odinger $m$ functions}, have several descriptions which are essentially obtained by the inverse spectral theories of these operators \cite{Borg,GL, GS2,Hor,Lev,Mar1,Mar2,RSim,RemAf,RemdB,SimIST}. Informally, these results reveal that a Herglotz function is a Schr\"odinger $m$ function precisely if it has the right large asymptotics.

It is not clear, however, how to express precise conditions along these lines. The descriptions in these theories have some Gelfand-Levitan type conditions which are usually stated in terms of a Fourier-Laplace type transform of the spectral measure. All this means that the problem above seems to lead to some difficult issues about how to enforce certain asymptotic behavior on approximating functions, with additionally not knowing what exactly we are trying to enforce.

Our goal in this paper is to present a transparent way to avoid these difficulties and, therefore, answer the problem positively. The crucial machinery for the new path is de Branges theory of canonical systems \cite{Ach,deB1,deB2,deB3,deB4,deB,Dym,HSW,KL,RemdB,Sakh,Win,Win2}, which enables us to rephrase the problem as one about approximations of canonical systems. 

More precisely, there are two well known facts in the theory; the first is the one-to-one correspondence between Herglotz functions and (trace-normed) canonical systems in  \cite{deB,Win}, and the second is the fact that trace-normed canonical systems are always in a limit point case at $\infty$ in \cite{Ach,deB2}. The latter is, especially, used to cook up a topology on canonical systems which interacts very well with Herglotz functions. 

The last piece of information for the route is the characterization of some special canonical systems. They are called \textit{Schr\"odinger canonical systems}, since they are some disguised Schr\"odinger equations such that each of them and its corresponding Schr\"odinger equation share their $m$ functions. This means that it is unnecessary to check if the $m$ functions associated with Schr\"odinger canonical systems are Schr\"odinger $m$ functions. They already are. This characterization, moreover, shows how to construct them. Besides de Branges theory, it is another key to deal with canonical systems, not with their $m$ functions. 

Based on the three ingredients above let us present our method for the problem in terms of canonical systems. 
See also the box below. For given a Herglotz function, by de Branges theory, there exists a unique trace-normed canonical system whose $m$ function is the given Herglotz function. It is then approximated by tailor-made Schr\"odinger canonical systems in the sense of the topology on canonical systems working well with Herglotz functions. This implies that their Schr\"odinger $m$ functions  converge to the given Herglotz function with respect to uniform convergence on compact subsets of the upper half plane.

\begin{framed}
\begin{center}
\small{
$\begin{CD}
\textbf{Herglotz functions}    @<\textrm{\large{?}}<<  \textbf{Schr\"odinger $m$ functions}\\
@V\textrm{\underline{de Branges}}V\textrm{\underline{theory}}V @AAA\\
\textbf{Canonical systems}   @<\textrm{Convergence in}<\textrm{canonical systems}<   \textbf{Schr\"odinger canonical systems}
\end{CD}$
}
\end{center}
\end{framed}

We would like to enlighten two more things. The main result can become stronger since it will be shown that all Weyl-Titchmarsh $m$ functions corresponding to Schr\"odinger operators with both \textit{smooth} potentials and  some \textit{fixed} boundary condition at 0 are dense in the space of all Herglotz functions. This will be discussed more clearly in Sections \ref{secScs} and, essentially,  \ref{secpf}. The other is that the procedure above is so general that it has a lot of potential to give ideas about unsolved questions of Schr\"odinger operators in the viewpoint of (inverse) spectral theory.\\

This paper is organized as follows.  The following section provides basic materials about Schr\"odinger operators, canonical systems and their $m$ functions. In Section \ref{secDensity} the main result is stated with several comments. We then, in Section \ref{secScs}, characterize all Schr\"odinger canonical systems, that is, all canonical systems which can be written as Schr\"odinger equations. As the last preparation, a topology on canonical systems is made up in Section \ref{sectop}. However, not to lose our main theme, several continuous properties are verified in Appendix A. The stronger result is finally proven in Section \ref{secpf} that all Schr\"odinger $m$ functions with a fixed boundary condition at 0 and smooth potentials are dense in the space of the Herglotz functions. \\

\section{Preliminaries}\label{secpre}
\subsection{Schr\"odinger operators and their $m$ functions} 
Let us start with one-dimensional Schr\"odinger operators 
\begin{equation}\label{Schop}
S=-\frac{d^2}{dx^2}+V(x)
\end{equation}
on $L^2(0,b)$, where $0<b<\infty$ or $b=\infty$, and $V$ are real-valued locally integrable functions, called potentials.   Schr\"odinger eigenvalue equations associated with (\ref{Schop}) is 
\begin{equation}
\label{Se}
-y''(x,z)+V(x)y(x,z)=zy(x,z), \quad x\in(0,b)
\end{equation}
where $z$ is a spectral parameter. It is then well known that each operator (\ref{Schop}), or equivalently each equation (\ref{Se}), with boundary condition(s) at 0 and possibly at $b$ has  a unique Weyl-Titchmarsh $m$ function and vice versa \cite{Borg,Mar1,Titch,Weyl}. 

More precisely,  put a boundary condition at 0, 
\begin{equation}
\label{bcat0}
y(0) \cos\alpha-y'(0) \sin\alpha=0
\end{equation} 
where $\alpha \in [0,\pi)$. For $0<b<\infty$, we place another boundary condition at $b$, 
\begin{equation}
\label{bcatb}
y(b) \cos\beta+y'(b) \sin\beta=0
\end{equation}
with another real number $\beta$ in $[0,\pi)$. Note that $\beta$ is used as a parameter for (\ref{bcatb}). When $b=\infty$, Weyl theory says that, if (\ref{Schop}) is in a limit point case at $\infty$, no more boundary condition except (\ref{bcat0}) is needed. However, if (\ref{Schop}) is in a limit circle case at $\infty$, that is, every solution of (\ref{Se}) is in $L^2(0,\infty)$, then it is necessary to have a limit type boundary condition at $\infty$ as follows:  Put $f(x,z):=u_{\alpha}(x,z)+m(z)v_{\alpha}(x,z)$, where $u_{\alpha}$ and $v_{\alpha}$ are the solutions to (\ref{Se}) satisfying the initial conditions, $u_{\alpha}(0,z)=v_{\alpha}'(0,z)=\cos\alpha$ and $-u_{\alpha}'(0,z)=v_{\alpha}(0,z)=\sin\alpha$. Then $m(z)$ is on the limit circle if and only if 
\begin{equation}
\label{bcatblcc}
\lim_{N\to\infty} W_N(\bar{f}, f)=0
\end{equation}
where $W_N$ is the Wronskian at N, that is, $W_N(f,g)=f(N)g'(N)-f'(N)g(N)$ and $\bar{f}$ is the complex conjugate of $f$. Similar to the case when $0<b<\infty$, $\beta$ is made use of as a parameter for these boundary conditions at $\infty$. See \cite{CodLev,Weid} for more details.

Then (\ref{Schop}) with (\ref{bcat0}) and possibly either (\ref{bcatb}) or (\ref{bcatblcc}) has a unique $m$ function $m^S_{\alpha, \beta}$  and it can be expressed by 
\begin{equation}
\label{mfnforSe}
m^S_{0, \beta}(z)=\frac{\tilde{y}'(0,z)}{\tilde{y}(0,z)}\quad \textrm{or}
\quad
m^S_{\alpha, \beta}(z)=\begin{pmatrix}  \cos\alpha & \sin\alpha \\ -\sin\alpha & \cos\alpha \end{pmatrix} \cdot
m^S_{0, \beta}(z)
\end{equation}
where $\tilde{y}$ is a solution to (\ref{Se}) which is square-integrable near $\infty$ when (\ref{Schop}) is in a limit point case at $b=\infty$, or which is satisfying either (\ref{bcatb}) when $0<b<\infty$ or (\ref{bcatblcc}) when (\ref{Se}) is in a limit circle case at $b=\infty$. Here $\cdot$ means the action of a 2$\times$2 matrix as a linear fractional transformation (which will be reviewed soon).  
For convenience $m_{\alpha,\beta}^S$ are called Schr\"odinger $m$ functions, as talked. They are \textit{Herglotz functions}, that is, they map the upper half plane $\C^+$ holomorphically to itself. See e.g. \cite{LS} for all these properties of $m_{\alpha,\beta}^S$.

Before going further, let us recall the action of linear fractional transformations, based on \cite{Remweyl}. A \textit{linear fractional transformation} is a map of the form 
\begin{equation*}
z\mapsto \frac{az+b}{cz+d}
\end{equation*}
with $a,b,c,d\in\C$, $ad-bc\neq 1$. This can be expressed very easily via matrix notation by 
\[
A\cdot z=\frac{az+b}{cz+d}, \quad A=\begin{pmatrix} a&b \\ c&d \end{pmatrix}.
\]
This notation has a natural interpretation: Identify $z\in\C\subset \mathbb{CP}^1$ with its homogeneous coordinates $z=[z:1]$ and apply the matrix $A$ to the vector 
$(\begin{smallmatrix} z\\1 \end{smallmatrix})$ 
whose components are these homogeneous coordinates. The image vector 
$A(\begin{smallmatrix} z\\1 \end{smallmatrix})$ 
then reveals what the homogeneous coordinates of the image of $z$ under the linear fractional transformation are. 

These remarks also show that the mapping 
\[
A\mapsto \textrm{linear fractional transformation}
\]
is a group homomorphism between the general linear group $GL(2,\C)$ and the non-constant linear fractional transformations, which implies that $\cdot$ can be thought of as the action of linear fractional transformations. The homomorphism property will be used in Section \ref{secScs}. Let us also mention that in (\ref{mfnforSe}) the special orthogonal group $SO(2,\R)$ is only considered among $GL(2,\C)$.\\

Even though Schr\"odinger $m$ functions are Herglotz functions, the converse is not true. To verify this let us see that, because of the Herglotz representation, not all Herglotz functions can have the asymptotic behavior which Schr\"odinger $m$ functions should do. Indeed,  Everitt \cite{Eve} showed that, when $z\in\C^+$ is large enough,  $m_{\alpha,\beta}^S$ satisfy the asymptotic behavior  
\begin{equation}
\label{asymm1}
m_{0,\beta}^S(z)=i\sqrt{z}+o(1) 
\end{equation}
for $\alpha=0$, or
\begin{equation}
\label{asymm2}
m_{\alpha,\beta}^S(z)= \frac{\cos\alpha}{\sin\alpha}+\frac1{\sin^2 \alpha}\frac{i}{\sqrt{z}}
+O \big( |z|^{-1} \big)
\end{equation}
for $\alpha\in (0,\pi)$. See also \cite{Atk,Har} for more developed versions of the asymptotic behavior of $m_{\alpha,\beta}^S$. 
Given a Herglotz function $F$, it can be expressed by  
\begin{equation}
\label{Herglotz}
F(z)=A+\int_{\R_{\infty}} \frac{1+tz}{t-z} d\rho(t)
\end{equation}
where $A$ is a real number and $d\rho$ is a finite positive Borel measure on $\R_{\infty}$, the one-point compactification of the set of all real numbers $\R$. (See e.g.  (2.1) in \cite{Remac}.) Then (\ref{Herglotz}) indicates that any Herglotz function with a measure $d\rho$ having a positive point mass at $\infty$ cannot satisfy (\ref{asymm1}) nor (\ref{asymm2}), and therefore it is not a Schr\"odinger $m$ function. However, for $d\rho$ to be a measure associated with  (\ref{Schop}) (or so called spectral measure), a more issue is on the asymptotic behavior of $d\rho$ near $\infty$. See two sections 17 and 19 of \cite{RemdB} for details.\\

\subsection{Canonical systems and de Branges theory} 
To see a general connection between Herglotz functions and differential equations let us consider a half-line canonical system,  
\begin{equation}
\label{cs}
Ju'(x,z)=zH(x)u(x,z), \quad x\in(0,\infty)
\end{equation}
 where $H$ is a positive semidefinite $2\times2$ matrix whose entries are real-valued, locally integrable functions and $J=\big( \begin{smallmatrix}  0 & -1 \\ 1 &0 \end{smallmatrix}$\big). A canonical system (\ref{cs}) is called \textit{trace-normed} if $\textrm{Tr }H(x)=1$ for almost all $x$ in $(0,\infty)$. For (\ref{cs}) we always place a boundary condition at 0,
\begin{equation}
\label{bcat0forcs}
u_1(0,z)=0
\end{equation}
where $u_1$ is the first component of $u=\big( \begin{smallmatrix}  u_1 \\ u_2\end{smallmatrix}$\big). Similar to 
(\ref{mfnforSe}),  its $m$ function, $m_H$, can be expressed by 
\begin{equation}
\label{mfnforcs}
m_H(z)=\frac{\tilde{u}_2(0)}{\tilde{u}_1(0)}
\end{equation}
where $\tilde{u}=\big( \begin{smallmatrix} \tilde{u}_1 \\ \tilde{u}_2 \end {smallmatrix} \big) $ is a solution to (\ref{cs}) satisfying   
\begin{equation}
\label{H-int}
\int_0^{\infty} \tilde{u}^*(x)H(x)\tilde{u}(x)dx<\infty.
\end{equation}
Here $^*$ means the Hermitian adjoint. Such a solution satisfying (\ref{H-int}) is called \textit{$H$-integrable}. See \cite{Win,Win2} for all these properties of (\ref{cs}). 

Recall that there were three cases when defining Schr\"odinger $m$ functions and in each case we needed a special solution to formulate the corresponding $m$ function. For (\ref{cs}) an $H$-integrable solution, however, is only needed, since (\ref{cs}) is half-line and a half-line trace-normed canonical system is always in a limit point case at $\infty$. In other words, there is only one $H$-integrable solution up to a multiplicative constant. See the original argument by \cite{deB2} or an alternative proof in \cite{Ach} for more details.\\

De Branges \cite{deB} and Winkler \cite{Win} then showed that, for a given Herglotz function, there exists a unique half-line trace-normed canonical system with (\ref{bcat0forcs}), such that its $m$ function $m_H$ is the given Herglotz function. This one-to-one correspondence is essential later in order to cope with canonical systems rather than Herglotz functions or their $m$ functions.\\

\section{Main result}\label{secDensity}
In this paper, we show the density of Schr\"odinger $m$ functions on all Herglotz functions or, equivalently, all  $m$ functions $m_H$ to (\ref{cs}) in the sense of their natural topology as analytic functions on $\C^+$ by the following theorem.

\begin{Theorem}\label{Density}
The space of Schr\"odinger $m$ functions with some fixed boundary condition at 0 is dense in the space of all Herglotz functions. 
\end{Theorem}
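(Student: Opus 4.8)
The plan is to run the argument of the diagram in Section~\ref{secintro}: transport the question into the language of canonical systems, carry out the approximation there, and transport it back. Fix once and for all a boundary value $\alpha_0\in[0,\pi)$ at $0$ (one may keep the Dirichlet case $\alpha_0=0$ in mind); we will in fact prove the sharper statement that already the functions $m^S_{\alpha_0,\beta}$ coming from \emph{smooth} potentials and this single $\alpha_0$ are dense among all Herglotz functions, which contains Theorem~\ref{Density}. So let $F$ be an arbitrary Herglotz function. By the de Branges--Winkler correspondence recalled in Section~\ref{secpre} there is a unique trace-normed half-line canonical system (\ref{cs}) with boundary condition (\ref{bcat0forcs}) whose $m$ function $m_H$ equals $F$. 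Because trace-normed half-line systems are automatically limit point at $\infty$, the $m$ function $m_H$ is genuinely well defined, and this is precisely the feature that allows one, in Section~\ref{sectop}, to put a topology $\tau$ on the space of such $H$'s for which $H\mapsto m_H$ is continuous into the Herglotz functions with uniform convergence on compact subsets of $\C^+$ (the requisite continuity being verified in Appendix~A). It thus suffices to produce Schr\"odinger canonical systems $H_n$, all carrying $\alpha_0$ at $0$, with $H_n\to H$ in $\tau$; by the characterization in Section~\ref{secScs} their $m$ functions $m_{H_n}$ are then automatically Schr\"odinger $m$ functions $m^S_{\alpha_0,\beta_n}$ with smooth potentials, and $m_{H_n}\to m_H=F$ locally uniformly on $\C^+$ finishes the proof.

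The content is therefore entirely in constructing the approximants $H_n$. By Section~\ref{secScs} a trace-normed canonical system is a Schr\"odinger canonical system for the fixed $\alpha_0$ exactly when it is of rank-one form $H=e_\theta e_\theta^{\mathsf T}$, $e_\theta=(\cos\theta,\sin\theta)^{\mathsf T}$, for a smooth, strictly increasing angle $\theta$ satisfying a normalization at $x=0$ imposed by $\alpha_0$ (in the Dirichlet case, for instance, $\theta(0)=0$ and $\theta'(0)=1$) and the right behaviour at $\infty$, with an associated smooth potential $V$ determined by $\theta$. So the task is to approximate in $\tau$ an arbitrary trace-normed $H$ --- genuinely rank two on part of $(0,\infty)$, and only locally integrable --- by members of this rigid class. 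I would do this by homogenization: on an exhausting sequence of windows $(0,L_n)$, take $H_n=e_{\theta_n}e_{\theta_n}^{\mathsf T}$ whose smooth, strictly increasing $\theta_n$ sweeps through all directions ever faster, with a spatially varying angular speed tuned so that the local occupation of $e_{\theta_n}$ near $x$ reproduces $H(x)$ --- recall every trace-one positive semidefinite $2\times2$ matrix is a mixture of rank-one projections; past $L_n$ close off with a fixed smooth-potential Schr\"odinger tail, and on a vanishing initial interval insert a monotone ramp enforcing the $\alpha_0$-normalization at $0$. The key point is that $\tau$ is finer than bare weak-$*$ convergence of Hamiltonians, so the angular speed cannot be sent to infinity crudely: it must be tuned so that the \emph{transfer matrices} $T_n(x,z)$ converge to $T(x,z)$ locally uniformly in $(x,z)\in[0,\infty)\times\C$, and the resulting rank-one systems must remain in the normalized class of Section~\ref{secScs} (so that their $m$ functions really are Schr\"odinger $m$ functions, rather than mere transforms of such). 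Concretely one must show that fast rotation of a rank-one $H$ reproduces, at the level of transfer matrices on compacta, the averaged --- in general rank two --- Hamiltonian, with the approximation tightening on every fixed $(0,L)$.

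The main obstacle is exactly this homogenization step: presenting an arbitrary, possibly rank-two and irregular, trace-normed Hamiltonian as a $\tau$-limit of the rigid, normalized, rank-one, smoothly-rotating Hamiltonians attached to smooth-potential Schr\"odinger equations with the prescribed $\alpha_0$. It splits into two delicate pieces, carried out in Sections~\ref{sectop}--\ref{secpf} and Appendix~A: designing $\tau$ to be simultaneously coarse enough that the homogenization, the tail surgery, and the initial ramp are $\tau$-continuous operations, yet fine enough --- and here the limit-point-at-$\infty$ property is essential --- that it still controls $m_H$; and the quantitative homogenization estimate for the transfer matrices, with the angle kept monotone, smooth, and correctly normalized at $0$ throughout. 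Granting these, the remainder is bookkeeping on top of the three ingredients already assembled: de Branges--Winkler, the characterization of Schr\"odinger canonical systems, and the topology $\tau$.
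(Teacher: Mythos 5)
Your overall route is the paper's: pass to the unique trace-normed canonical system via de Branges--Winkler, approximate it by Schr\"odinger canonical systems (rank-one Hamiltonians $P_{\varphi}$ with smooth, strictly increasing, correctly normalized $\varphi$), and transport the convergence back through the continuity of $H\mapsto m_H$. But as written the proposal has a genuine gap exactly where you yourself locate ``the main obstacle'': the homogenization step is described as a plan (``angular speed tuned so that the local occupation reproduces $H(x)$'') and then granted rather than carried out, and that step is the entire content of the proof. You also mislocate the difficulty. The topology of Section \ref{sectop} is \emph{not} finer than weak-$*$ convergence of Hamiltonians; it \emph{is} weak-$*$ convergence (\ref{weak*conv}) on the compact space $\V_+$ of trace-normed $H$, and Proposition \ref{CovofH} already converts this into locally uniform convergence of the $m$ functions. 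The locally uniform convergence of solutions and transfer matrices is an automatic consequence of weak-$*$ convergence plus the trace normalization (uniform bounds, equicontinuity, Arzel\`a--Ascoli; see Lemma \ref{convergenceforH}), so no quantitative homogenization estimate for transfer matrices is needed and no tuning of rotation speeds against transfer-matrix error: all you must verify is weak-$*$ convergence of the approximants.

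With that understood, the missing construction is soft. Fix $n$ and average $H$ over the dyadic intervals $I_{j,n}=[j2^{-n},(j+1)2^{-n})$ to get constant trace-one positive semidefinite matrices $H_{j,n}$; by the spectral theorem $H_{j,n}=\lambda_{j,n}P_{\varphi_{j,n}}+(1-\lambda_{j,n})P_{\varphi_{j,n}+\pi/2}$, so one defines a nondecreasing step function $\varphi_n$ that dwells on the value $\varphi_{j,n}$ for a fraction $\lambda_{j,n}$ of $I_{j,n}$ and on $\varphi_{j,n}+\pi/2$ for the rest (adding multiples of $\pi$ where needed to keep it nondecreasing). Then $\int_{I_{j,n_0}}P_{\varphi_n}=\int_{I_{j,n_0}}H$ for all $n\ge n_0$, and a routine estimate with continuous test functions gives $P_{\varphi_n}\to H$ weak-$*$. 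One then replaces each step function by a smooth, strictly increasing $\tilde\varphi_{m,n}$ with $\tilde\varphi_{m,n}(0)=\alpha$ and slope $1$ near $0$, converging to $\varphi_n$ in $L^1$; since the entries of $P_{\varphi}$ are uniformly continuous in $\varphi$, this preserves weak-$*$ convergence, and Proposition \ref{Cor} guarantees these are Schr\"odinger canonical systems with smooth potentials and the prescribed boundary value $\alpha$. A diagonal choice of $(n,m)$ finishes. This is precisely the paper's argument: your proposal has the right skeleton and correctly identifies the sharper statement being proved, but it omits the central construction and overestimates what the topology demands of it.
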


The above theorem is stronger than what we just said, since, as the statement itself says, all Schr\"odinger $m$ functions \textit{with any fixed boundary condition at 0} are dense in all Herglotz functions. The result can, moreover, be stronger with such Schr\"odinger $m$ functions corresponding to only \textit{smooth} potentials, which will be clear in Section \ref{secpf} after proving. As its application, Schr\"odinger $m$ functions with the Dirichlet boundary condition at 0, $m^S_{0,\beta}$, corresponding to smooth potentials are dense in all Herglotz functions. 
 
Due to Theorem \ref{Density} it cannot be expected that Schr\"odinger operators with some fixed boundary condition at 0 converge to some Schr\"odinger operator in the sense that this convergence is equivalent to the uniform convergence of their $m$ functions on compact subsets of $\C^+$. This is one of reasons why only subclasses of Schr\"odinger operators are considered in many applications to make them compact.\\

Remark. It seems very difficult to show Theorem \ref{Density} through $m$ functions directly, even though the measures corresponding to (\ref{Schop}), called Schr\"odinger spectral measures, are dense in the space of all measures in the Herglotz representation with respect to weak-$\ast$ convergence. In (\ref{Herglotz}) we can see that the uniform convergence of Herglotz functions on compact subsets of $\C^+$ is equivalent to both the weak-$*$ convergence of the measures $d\rho$ and the pointwise convergence of the constants $A$. In particular, the weak-$*$ convergence of measures (without the convergence of constants) is not sufficient for the convergence of Herglotz functions. 

It turns out that any finite positive Borel measure on $\R_{\infty}$ can be approximated by Schr\"odinger spectral measures in the weak-$*$ sense. Indeed, for any finite positive Borel measure $d\rho$ on $\R_{\infty}$, construct a sequence of measures $d\rho_n$ by 
\begin{equation*}
d\rho_n(t)=\chi_{(-n,n)}(t) d\rho(t)+\chi_{\R \setminus (-n,n)}(t) d\rho_{free}(t)+\rho\{\infty\}\delta_n(t)
\end{equation*}
where $\delta_n$ is a Dirac measure at $n$ and $d\rho_{free}$ is the spectral measure for (\ref{Schop}) with $V\equiv 0$. In other words, this is a sequence of truncated measures having the tail of $d\rho_{free}$ and putting the point mass at $n$ with the weight $\rho\{\infty\}$, which implies that $d\rho_n$ are Schr\"odinger spectral measures. Then $d\rho_n\to d\rho$ in the weak-$*$ sense, as $n\to\infty$. The weak-$*$ convergence of spectral measures $d\rho_n$, however, does not imply the convergence of $m$ functions associated with $d\rho_n$ in (\ref{Herglotz}). This is because any Schr\"odinger spectral measures determine their $m$ functions; the error term in (\ref{asymm1}) or (\ref{asymm2}) is at least $o(1)$, which means that spectral measures  decide the corresponding constants in (\ref{Herglotz}). Therefore it is unclear if these constants converge to some constant, and even worse we cannot see if they will converge to the constant corresponding to a given Herglotz function.\\

\section{Schr\"odinger canonical systems}\label{secScs}
It is well known that Schr\"odinger equations can be expressed by some canonical systems (which will be shown later) but the converse is not true. This is the reason why canonical systems are thought of as generalizations of Schr\"odinger equations. For us it is, however, necessary to learn which canonical systems admit Schr\"odinger $m$ functions as their $m$ functions. In this section, let us figure out all the conditions for such canonical systems, called Schr\"odinger canonical systems as before. 
 
\begin{Proposition}
\label{Cor}
A Schr\"odinger equation (\ref{Se}) with boundary conditions (\ref{bcat0}) and, if necessary, either (\ref{bcatb}) or (\ref{bcatblcc}) can be expressed as the following canonical system such that both have the same Weyl-Titchmarsh $m$ functions: 
\begin{equation}\label{Sc}
J \frac{d}{dt}u(t,z)=z P_{\varphi}(t) u(t,z) , \quad t\in(0,\infty) 
\end{equation} 
with 
\begin{equation}
\label{HforS}
P_{\varphi}(t):=\begin{pmatrix} \cos^2\varphi(t) & \cos \varphi(t) \sin \varphi(t) \\ \cos \varphi(t) \sin \varphi(t) & \sin^2\varphi(t) \end{pmatrix}. 
\end{equation}
Here a new variable $t$ is defined by 
\begin{equation}
\label{t}
t(x)=\int_0^x \big( u_{0}^2(s)+v_{0}^2(s) \big) \textrm{ }ds
\end{equation}
where $u_{0}$ and $v_{0}$ are the solutions to the given Schr\"odinger equation for $z=0$ with $u_0(0)=v'_0(0)=\cos\alpha$ and $-u'_0(0)=v_0(0)=\sin\alpha$. 
Put $t_b:= \lim_{x\uparrow b}t(x)$ in $(0,\infty]$. Then $\varphi$ is a strictly increasing function on $(0,t_b)$, which has a locally integrable third derivative on $(0,t_b)$, satisfying three initial conditions
\begin{equation}\label{initialcondition}
 \varphi(0)=\alpha,\quad \frac{d\varphi}{dt}(0)=1, \textit{ and }\quad \frac{d^2\varphi}{dt^2}(0)=0.
\end{equation}
If $t_b<\infty$, then $\varphi(t)=\tilde{\beta}$ on $(t_b, \infty)$ for some real number $\tilde{\beta}\in [0,\pi)$. 

Conversely, any canonical system (\ref{Sc}) with all the properties of $\varphi$ above can be written as (\ref{Se}) with some locally integrable potential $V$ such that they have the same $m$ function.\\
\end{Proposition}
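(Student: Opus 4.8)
The plan is to establish the equivalence between Schr\"odinger equations and canonical systems of the special form (\ref{Sc})--(\ref{HforS}) by an explicit change of variables, treating the two directions separately.

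\medskip

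\noindent\textbf{From Schr\"odinger to canonical systems.} First I would use the Pr\"ufer-type substitution adapted to the new variable $t$ in (\ref{t}). Writing $u_\alpha, v_\alpha$ for the fundamental solutions of (\ref{Se}) with the initial data dictated by $\alpha$, the transfer matrix $T(x,z)=\left(\begin{smallmatrix} u_\alpha & v_\alpha \\ -u_\alpha' & -v_\alpha' \end{smallmatrix}\right)$ satisfies a first-order system. The key computation is to show that when one reparametrizes by $t=t(x)$ and conjugates by the rotation $R(\varphi)$ with $\varphi$ defined through $u_0,v_0$ (so that $\tan\varphi$ tracks the ratio $v_0/u_0$ up to the normalization built into (\ref{t})), the system (\ref{Se}) turns into exactly $J\dot u = z P_\varphi(t) u$. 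Concretely, since $t'(x)=u_0^2+v_0^2>0$, the map $x\mapsto t$ is a $C^1$ diffeomorphism from $(0,b)$ onto $(0,t_b)$, and the chain rule converts $d/dx$ into $(u_0^2+v_0^2)\,d/dt$; one then checks that $P_\varphi$ is precisely the rank-one projection $\xi(t)\xi(t)^{\mathsf T}$ with $\xi=(\cos\varphi,\sin\varphi)^{\mathsf T}$, and that the Wronskian normalization $u_0 v_0' - u_0' v_0 = \cos^2\alpha+\sin^2\alpha=1$ forces $d\varphi/dt = 1/(u_0^2+v_0^2)\cdot(u_0^2+v_0^2)$ at appropriate places — more carefully, $\varphi$ is a genuine angle variable whose derivative is positive, giving strict monotonicity. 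The initial conditions (\ref{initialcondition}) follow by Taylor-expanding $u_0,v_0$ at $x=0$ using $u_0(0)=\cos\alpha$, $v_0(0)=\sin\alpha$, $u_0'(0)=-\sin\alpha$, $v_0'(0)=\cos\alpha$, together with $u_0''=Vu_0$ vanishing to the relevant order because $u_0'(0),v_0'(0)$ contribute the linear term and the absence of a $V$-contribution at $x=0$ kills the second derivative of $\varphi$. The regularity claim ($\varphi$ has a locally integrable third derivative) comes from differentiating the defining relation for $\varphi$ three times: each derivative of $t$ brings down factors of $u_0,v_0$ and their derivatives, and the third derivative of $\varphi$ ends up expressed through $V$ (which is only locally integrable), hence locally integrable. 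Finally, that the $m$ functions agree is essentially built in: the boundary condition (\ref{bcat0forcs}) $u_1(0)=0$ corresponds under the rotation to (\ref{bcat0}), the $H$-integrability condition (\ref{H-int}) translates back to $L^2(0,b)$ (since $t'=u_0^2+v_0^2$ is exactly the Jacobian), and the ratio $\tilde u_2(0)/\tilde u_1(0)$ in (\ref{mfnforcs}) becomes $\tilde y'(0)/\tilde y(0)$ in (\ref{mfnforSe}) after the $SO(2,\R)$ linear-fractional action — here the group-homomorphism remark from Section~\ref{secScs} does the bookkeeping. If $t_b<\infty$, one sets $\varphi\equiv\tilde\beta$ beyond $t_b$ to encode (\ref{bcatb}) or (\ref{bcatblcc}); since $P_{\tilde\beta}$ is then constant rank one, the continued canonical system adds no new spectral content and preserves the $m$ function.

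\medskip

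\noindent\textbf{From canonical to Schr\"odinger.} For the converse I would run the construction backwards. Given $\varphi$ strictly increasing with locally integrable third derivative and the three initial conditions, I want to recover a potential $V$ and an $\alpha$. Set $\alpha:=\varphi(0)$. Then I would define $u_0,v_0$ on $(0,t_b)$ by declaring $(u_0,v_0)$ to be, up to normalization, $(\cos\varphi,\sin\varphi)$ times an amplitude $\sqrt{d x/dt}$, i.e. solve the ODE $dx/dt = $ (something determined by $\varphi'$) to invert the substitution (\ref{t}); strict monotonicity of $\varphi$ and positivity of $\varphi'$ near $0$ guarantee this is well posed and that $x\mapsto t$ is invertible. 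Having $x(t)$ one transports the canonical system back to the $x$-variable and conjugates by $R(-\varphi)$; the resulting second-order equation in $x$ must be checked to be of Schr\"odinger form $-y''+Vy=zy$ with no first-order term — this is where the precise hypotheses on $\varphi$ (three initial conditions plus $C^2$-with-integrable-third-derivative) are used, since they are exactly what is needed to kill the would-be first-order coefficient and to make $V$ come out locally integrable (the potential will be a polynomial expression in $\varphi',\varphi'',\varphi'''$ and $1/\varphi'$). One then reads off that the $m$ functions match by the same boundary-condition and $H$-integrability correspondence as above.

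\medskip

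\noindent\textbf{Main obstacle.} I expect the delicate point to be the regularity/normalization matching in both directions — specifically, verifying that the transformed second-order equation has \emph{exactly} Schr\"odinger form (no $y'$ term, correct normalization of the $zy$ term) and that this is equivalent to precisely the three conditions in (\ref{initialcondition}) plus the locally-integrable-third-derivative hypothesis, with the recovered $V$ genuinely locally integrable. In other words, the linear-algebra/ODE manipulations are routine, but pinning down the \emph{sharp} correspondence between "canonical system of the form $P_\varphi$ with these exact constraints on $\varphi$" and "Schr\"odinger equation with locally integrable $V$ and boundary angle $\alpha$" — so that the characterization is both necessary and sufficient — is the crux, and the initial conditions $\varphi(0)=\alpha$, $\varphi'(0)=1$, $\varphi''(0)=0$ are exactly the fingerprints of that sharpness.
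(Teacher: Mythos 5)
Your proposal follows essentially the same route as the paper: the polar (Pr\"ufer) substitution $u_0+iv_0=R e^{i\varphi}$ together with the map (\ref{connection}), trace-normalization via the change of variable $t(x)=\int_0^x(u_0^2+v_0^2)$, the Wronskian identity $1=W(u_0,v_0)=R^2\varphi'$ for strict monotonicity, a singular interval of type $\tilde\beta$ on $(t_b,\infty)$ when $t_b<\infty$, the $SO(2,\R)$ linear-fractional action plus the Jacobian $t'=u_0^2+v_0^2$ to match $H$-integrability and the $m$ functions, and the converse by inverting the substitution with $R=(\varphi')^{-1/4}$ so that $V$ comes out as a rational expression in $\varphi',\varphi'',\varphi'''$ (this is exactly (\ref{formulaforV})). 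One correction to your closing discussion: the absence of a first-order term in the recovered second-order equation is forced by the \emph{global} normalization $R^2\varphi'\equiv 1$ (the Liouville choice of amplitude, which incidentally is $\sqrt{dt/dx}=(\varphi')^{-1/4}$, not $\sqrt{dx/dt}$), whereas the pointwise conditions (\ref{initialcondition}) serve only to fix $R(0)=1$ and $R'(0)=0$, i.e.\ to make $u_0,v_0$ carry the initial data $\cos\alpha,\sin\alpha$ so that the boundary condition $u_1(0)=0$ transforms to (\ref{bcat0}) with the stated $\alpha$ and the $m$ functions agree.
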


In short, the proposition reveals that Schr\"odinger equations are exactly the canonical systems with  projection matrices $P_{\varphi}$ as their $H$ in (\ref{cs}), such that $\varphi$ are strictly increasing functions on $(0,t_b)$ having the third derivatives which have the same regularity with potentials $V$, and they behave as linear functions with slope 1 near 0. If $t_b<\infty$, $\varphi$ are constant on $(t_b,\infty)$. Moreover, their function values at 0, $\varphi(0)$, are the same as $\alpha$ in (\ref{bcat0}) up to multiples of $\pi$. 

Note that, to have the condition $\varphi(0)=\alpha$ precisely, we assume that $\varphi(0)$ are in $[0,\pi)$. This is fine because all entries in $P_{\varphi}$ are periodic with the period $\pi$,  which means that $\varphi$ can be shifted by multiples of $\pi$ at our disposal. 

 The reason why obtaining the one-to-one correspondence between (\ref{Se}) and (\ref{Sc}) is that their $m$ functions are the same. Without this restriction, it is possible to connect (\ref{Se}) to infinitely many different trace-normed canonical systems which, of course, take different $m$ functions from each other.

 It is well known how to convert (\ref{Se}) to (\ref{cs}) (see e.g.  \cite{Achtw} or \cite{RemdB}) and the converse of Proposition \ref{Cor} is a kind of reformulation of proposition 8.1 in \cite{RemdB}  in terms of trace-normed canonical systems. It is also realized that a very similar form to (\ref{Sc}) was discussed in \cite{L&W} and \cite{W&W} to deal with semibounded canonical systems. However, (\ref{Sc}) with (\ref{HforS}) is a specific form which exactly fits to Schr\"odinger canonical systems. It is also efficacious, since $\varphi$ can be thought of as a spectral data. For example, since the third derivative of $\varphi$ and $V$ have the same regularity and (\ref{Sc}) can be well defined with \textit{measurable} $\varphi$, singular potentials may be treated by dropping the regularity of $\varphi$. 

The reason to have projection matrices $P_{\varphi}$ is the asymptotic behavior of solutions to (\ref{Se}).  Indeed, de Branges, Krein and Langer \cite{deB2,KL} showed that solutions to (\ref{cs}) belong to Cartwright class of the exponential type $h$ with
\begin{equation*}
h=\int_0^x \sqrt{\textrm{det }H(t)}dt
\end{equation*}
for fixed $x$. An entire function $F$ belongs to \textit {Cartwright class of the exponential type $h$} if 
\begin{equation*}
h:=\limsup_{|z|\to\infty}\frac{\textrm{ln }|F(z)|}{|z|} \textrm{ is finite}, 
\end{equation*}
and 
\begin{equation*}
\int_{-\infty}^{\infty}\frac{ | \textrm{ln } |F(x)| |}{1+x^2}<\infty. 
\end{equation*}
P\"oschel and Trubowitz \cite{P&T} then showed that solutions to (\ref{Se}) are of order 1/2 as entire functions with respect to $z$ for fixed $x$. See also (4.3) in \cite{RemdB}. In particular, they are of exponential type 0 and so are the solutions to (\ref{cs}) associated with them (by (\ref{connection}) below), which implies that  $\textrm{det }H=0$. Since $H$ are symmetric, the two conditions, $\textrm{Tr }H(x)=1$ and $\textrm{det }H(x)=0$ for almost all $x$, indicate that $H$ should be projection matrices $P_{\varphi}$ after some change of variables.\\ 

 Let us now verify Proposition \ref{Cor}. 
\begin{proof}[Proof of Proposition \ref{Cor}]
Let $y$ be a solution to a given Schr\"odinger equation (\ref{Se}). Define $u=u(x,z)=(u_1(x,z), u_2(x,z))^t$ by 
\begin{equation}
\label{connection}
 \begin{pmatrix} u_1 \\ u_2 \end{pmatrix}:=
\begin{pmatrix} u_0(x) & v_0(x) \\ u_0'(x) & v_0'(x) \end{pmatrix}^{-1}\begin{pmatrix} y \\ y' \end{pmatrix}.
\end{equation}
Note that this is well defined, since the determinant of the $2\times 2$ matrix in (\ref{connection}) is the Wronskian of $u_0$ and $v_0$ at $x$, $W_x(u_0,v_0)$, which is 1 for all $x$; in particular, this matrix is invertible. Then $u$ solves (\ref{cs}) with 
\begin{equation*}
H_0(x):= \begin{pmatrix} u_0^2(x) & u_0(x)v_0(x) \\ u_0(x)v_0(x) & v_0^2(x) \end{pmatrix}. 
\end{equation*}
This is shown by direct computation, which is left to readers. 

The matrix $H_0(x)$ may not be trace-normed, and it should be changed to a trace-normed matrix in order to apply de Branges theory. 
For this we do a change of variable as follows; define $R$ and $\varphi$ through
\begin{equation}
\label{defofR}
 u_0(x)+iv_0(x):=R(x) (\cos\varphi(x)+i\sin\varphi(x))
\end{equation}
 and a new variable $t$ by (\ref{t}). Then $u(t,z)$ solves (\ref{Sc}) with (\ref{HforS}), but only on $(0,t_b)$.\\

Let us investigate all the conditions of $\varphi$ in the proposition. Direct computation with (\ref{defofR}) shows the key relation  
\begin{equation}
\label{Wron}
(1=) \textrm{ } W(u_0,v_0)|_{x}=R^2(x)\varphi'(x), \quad x\in(0,b), 
\end{equation} 
which tells us that $\varphi$ is strictly increasing on $(0,b)$ with respect to $x$. Due to three equalities $u_0''=Vu_0$, $v_0''=Vv_0$ and $u^2_0+v^2_0=R^2$, the functions $V$, $u_0''$, $v_0''$, $R''$ and $\varphi'''$ are locally integrable. See also (\ref{formulaforV}) below. These are because of two following facts; the derivatives of solutions to (\ref{Se}) are absolutely continuous when $V$ is locally integrable, and two linearly independent solutions cannot be zero at the same time. The initial values of $u_0$ and $v_0$ can also be transformed to the conditions $\varphi(0)=\alpha$, $R(0)=1$, and $R'(0)=0$, or equivalently, $\varphi(0)=\alpha$, $\varphi'(0)=1$, and $\varphi''(0)=0$ by direct computation. Note that $\varphi$ is normalized by the condition $\varphi(0)\in[0,\pi)$, as mentioned.

So far all the conditions of $\varphi$ have been verified with respect to $x$. Now that $dt/dx=u^2_0+v^2_0=R^2$ and $R$ cannot be zero, these conditions can be converted to the ones with the new variable $t$. In other words, it can be shown that, as a function of $t$,  $\varphi$ is a strictly increasing function on $(0,t_b)$ satisfying (\ref{initialcondition}) whose third derivative is locally integrable. The details are left to readers.\\ 

Note that $t_b<\infty$ precisely if $b<\infty$ or (\ref{Se}) is in a limit circle case at $b=\infty$, since all solutions to (\ref{Se}) are in $L^2(0,b)$ in these two cases. 
When $t_b<\infty$, it is not difficult to see that the boundary condition (\ref{bcatb}) or (\ref{bcatblcc}) can be converted to a similar one 
\begin{equation}\label{bcforcs}
u_1(t_b) \cos(\tilde{\beta})+u_2(t_b) \sin(\tilde{\beta})=0
\end{equation}
for $u$  with another number $\tilde{\beta}\in [0,\pi)$. To obtain a \textit{half-line} canonical system we change (\ref{bcforcs}) to a singular interval $(t_b, \infty)$ of type $\tilde{\beta}$, in other words, $H=P_{\tilde{\beta}}$ on this interval, where $P_{\tilde{\beta}}$ is (\ref{HforS}) with $\tilde{\beta}$ instead of $\varphi(t)$. 

Observe that, for all $t\in[t_b,\infty)$, 
\begin{equation}\label{trivialext}
u(t)=u(t_b)
\end{equation}
and 
\begin{equation}\label{te}
u^*(t)P_{\tilde{\beta}}\textrm{ }u(t)=0. 
\end{equation}
Indeed, if $u$ satisfies (\ref{bcforcs}), $P_{\tilde{\beta}}\textrm{ }u(t_b)$ is the $2\times 1$ zero matrix. With this, since $I_{2}-zLJP_{\tilde{\beta}}$ is the transfer matrix on $(t_b,\infty)$ for a nonnegative number L (see e.g. section 10 in  \cite{RemdB}), where $I_2$ is the $2\times2$ identity matrix, we have that   
\begin{equation*}
u(t_b+L)=(I_2-zLJP_{\tilde{\beta}})u(t_b)=u(t_b),
\end{equation*}
which implies (\ref{trivialext}) and (\ref{te}). All this means that, if a solution $u$ satisfies (\ref{bcforcs}), then it can be trivially extended to $[t_b,\infty)$ such that 
\begin{equation}\label{extofu}
\int_{t_b}^{\infty} u^*(t)P_{\tilde{\beta}}u(t) dt=0.
\end{equation}\\

So far we have constructed (\ref{Sc}) with (\ref{HforS}) as their $H$. It remains to show that both (\ref{Se}) and (\ref{Sc}) have the same $m$ function. To see this let us compare their solutions which were used to define their $m$ functions. Recall $\tilde{y}$ in (\ref{mfnforSe}), that is, $\tilde{y}$ is a solution to (\ref{Se}) which is square-integrable near $b=\infty$ or satisfying either (\ref{bcatb}) or (\ref{bcatblcc}), and let $\tilde{u}$ be the solution to (\ref{Sc}) corresponding to $\tilde{y}$ through (\ref{connection}). Then either $\tilde{u}$ or its trivial extension (again denoted by $\tilde{u}$)  through (\ref{trivialext}) is $H$-integrable with $H=P_{\varphi}$. More clearly, since $\tilde{y} \in L^2[0,b)$, that is, 
\begin{equation*}
\int_0^{b} \tilde{y}(x)^{*}\tilde{y}(x) dx<\infty, 
\end{equation*}
by (\ref{connection}), the above $L^2$-condition is equivalent to the $H_0$-integrability only on $(0,b)$, i.e., 
\begin{equation*}
\int_0^{b} \tilde{u}(x)^*H_0(x) \tilde{u}(x) dx<\infty. 
\end{equation*}
 The change of variable to the new variable $t$ then gives us the condition with respect to $t$ 
\begin{equation*}
\int_0^{t_b} \tilde{u}(t)^*P_{\varphi}(t) \tilde{u}(t) dt<\infty. 
\end{equation*}
If $t_b=\infty$, that is, (\ref{Se}) is in a limit point case at $\infty$, then $\tilde{u}$ is trivially $P_{\varphi}$-integrable. When $t_b<\infty$, the trivial extension of $\tilde{u}$ is $P_{\varphi}$-integrable due to (\ref{extofu}). In other words, since $\tilde{u}$ satisfies (\ref{bcforcs}), its trivial extension $\tilde{u}$ by (\ref{trivialext}) to $(0,\infty)$  is $P_{\varphi}$-integrable. 

Let us now compare their $m$ functions. Recall that $m^S_{\alpha, \beta}$ are the $m$ functions of (\ref{Se}) with (\ref{bcat0}) and, if necessary, either (\ref{bcatb}) or (\ref{bcatblcc}), and $m_H$ are the $m$ functions of (\ref{Sc}) with $H=P_{\varphi}$.  By (\ref{mfnforSe}), (\ref{mfnforcs}) and (\ref{connection}), we then see that  
\begin{eqnarray*}
m_H(z)=m_{P_{\varphi}}(z) &=& \frac{\tilde{u}_2(0)}{\tilde{u}_1(0)}\\
&=& \begin{pmatrix} 0 & 1 \\ 1 & 0 \end{pmatrix} \cdot \frac{\tilde{u}_1(0)}{\tilde{u}_2(0)}\\
&=& \begin{pmatrix} 0 & 1 \\ 1 & 0 \end{pmatrix} 
\begin{pmatrix} u_0(0) & v_0(0) \\ u_0'(0) & v_0'(0) \end{pmatrix}^{-1}
\begin{pmatrix} 0 & 1 \\ 1 & 0 \end{pmatrix} 
\cdot
\frac{\tilde{y}'(0,z)}{\tilde{y}(0,z)}\\
&=&
\begin{pmatrix}  \cos\alpha & \sin\alpha \\ -\sin\alpha & \cos\alpha \end{pmatrix} \cdot
\frac{\tilde{y}'(0,z)}{\tilde{y}(0,z)}\\
&=& m^S_{\alpha, \beta}(z)
\end{eqnarray*} 
where $\cdot$ is the action of a 2$\times$2 matrix as a linear fractional transformation, which was reviewed in Section \ref{secpre}.
Therefore (\ref{Sc}) has been constructed from (\ref{Se}), as desired.\\

For the converse, let us go through the previous process, but in reverse. Assume that (\ref{Sc}) with (\ref{HforS}) be given such that $\varphi$ has all the properties in Proposition \ref{Cor}. If $\varphi$ is constant on a unbounded interval $(c,\infty)$ for some number $c$,  it is possible to put a suitable boundary condition at $c$, which is similar to (\ref{bcforcs}). When $\varphi$ is strictly increasing on $(0,\infty)$, put $c=\infty$, which implies that the corresponding Schr\"odinger operator (\ref{Schop}) will be in a limit point case at $\infty$. 

Since $\frac{d\varphi}{dt}>0$ on $(0,c)$, let us recognize a variable $x$  by 
\begin{equation*}
x(t)=\int_0^t \left[ \frac{d}{dt}\varphi(s) \right]^{1/2} ds   
\end{equation*}
on $(0,x_c)$, where $x_c:=\lim_{t\uparrow c} x(t)$. 
By putting $R(x):= \left[ \frac{d}{dt} \varphi(t(x)) \right] ^{-1/4}$ we can see that $t'(x)=R^2(x)$ and $R^2(x)\varphi'(x)=1$ (here $'$ means $\frac{d}{dx}$). As expected, let's define $u_0$ and $v_0$ by $u_0(x)=R(x) \cos\varphi(x)$ and $v_{0}(x)= R(x)\sin \varphi(x)$. Direct computation then shows that $u$ satisfies (\ref{cs}) with $H_0(x)$ and $R^2(x)\varphi'(x)=W_x(u_0,v_0)$ for all $x$. The nonzero constant Wronskian condition, moreover, allows us to define $y$ through  (\ref{connection}). Then $y$ satisfies (\ref{Se}) with the potential $V$ 
\begin{equation}
\label{formulaforV}
V=\frac7{16}\frac{(\varphi''(x))^2}{(\varphi'(x))^3}-\frac14\frac{\varphi'''(x)}{(\varphi'(x))^2}-\varphi'(x)\quad
\Big(=\frac{R''}{R}-\frac1{R^4} \Big)
\end{equation}
by direct computation, which is left to readers.
 
Similar to the previous argument, it is possible to compare their solutions and then to show that their $m$ functions are the same. Proposition \ref{Cor} now has been proven, as desired.\\ 
\end{proof}

\section{Topology on canonical systems}\label{sectop}
The last preparation is to construct a topology on the set of trace-normed canonical systems (\ref{cs}), which interacts well with the convergence on their $m$ functions.
Let $\V_+$ be the set of the matrices $H$ on trace-normed canonical systems, that is, 
\begin{equation*}
\V_+=\{ H \textrm{ in } (\ref{cs}): \textrm{ Tr }H(x)= 1 \textrm{ for almost all } x\in (0,\infty) \}.
\end{equation*}
Recall that $H$ is a positive semidefinite $2\times2$ matrix whose entries are real-valued, locally integrable functions. Let us say that \textit{$H_n$ converges to $H$ weak-$*$},  if   
\begin{equation}
\label{weak*conv}
\int_0^{\infty}f^*H_nf\to\int_0^{\infty}f^*Hf
\end{equation}
for all continuous functions $f=(f_1,f_2)^t$ with compact support of $[0,\infty)$, as $n\to\infty$. Observe that, for such a given function $f$, two convergences 
\begin{equation*}
\int_0^{\infty}f^*H_nf\to\int_0^{\infty}f^*Hf \quad \textrm{and }  \int_0^{\infty}H_nf\to\int_0^{\infty}Hf
\end{equation*}
are equivalent.\\

By the similar argument in section 2 of \cite{Remcont}, it is briefly shown that $\V_+$ is a compact metric space. First proceed to define a metric on $\V_+$: pick a countable dense (with respect to $|| \cdot ||_{\infty}$) subset $\{ f_n: n\in\N \}$, continuous functions of compact support, and put 
\begin{equation*}
d_n(H_1, H_2):= \Big| \int_{(0,\infty)} f_n^*(x) (H_1-H_2)(x) f_n(x)\textrm{ } dx \textrm{ } \Big|. 
\end{equation*}
Then define a metric $d$ as
\begin{equation*}
d(H_1, H_2):=\sum_{n=1}^{\infty} 2^{-n} \frac{d_n(H_1, H_2)}{1+d_n(H_1, H_2)}. 
\end{equation*}
Clearly, $d(H_n,H)\to 0$ if and only if $H_n$ converges to $H$ weak-$*$, as $n\to\infty$. To show that $(\V_+,d)$ is compact, let us  choose a sequence $H_n$ in $\V_+$. By the Banach-Alaoglu Theorem (on finite intervals $[0,L]$ for some positive numbers $L$) and a diagonal process (for the half line $[0,\infty)$) it is possible to find a subsequence $H_{n_j}$ with the property that the measures $H_{n_j}(t)dt$ converge to some matrix-valued measure $d\mu$ in the weak-$*$ sense. The proof can now be completed by noting that the trace-normed condition, $\textrm{ Tr }H(x)= 1$, is preserved in the limiting process, which implies that the limit measure $d\mu$ is absolutely continuous with respect to the Lebesgue measure and it can be expressed by $H(t)dt$ for some $H$ in $\V_+$.\\

The topology on $\V_+$ above works fine with $m$ functions by the following proposition. 
\begin{Proposition}\label{CovofH}
The map from $\V_+$ to $\overline{\mathbb H}$, defined by $H\mapsto m_H$, is a homeomorphism, where $\mathbb H$ is the set of all (genuine) Herglotz functions and $\overline{\mathbb H}=\mathbb H \cup\R\cup \{ \infty \}$.
\end{Proposition}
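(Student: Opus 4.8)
The plan is to check that the map $\Phi\colon\V_+\to\overline{\mathbb H}$, $\Phi(H)=m_H$, is well defined, bijective and continuous, and then to conclude by a soft argument: $\V_+$ is a compact metric space by the discussion above, so any continuous bijection of $\V_+$ onto a Hausdorff space is automatically a homeomorphism and no separate treatment of $\Phi^{-1}$ is needed. That $\Phi$ is well defined, onto, and injective is exactly the de Branges--Winkler correspondence recalled in Section~\ref{secpre}: every half-line trace-normed canonical system (\ref{cs}) with the boundary condition (\ref{bcat0forcs}) has a unique $m$ function, a genuine Herglotz function or one of the limiting constants in $\R\cup\{\infty\}$ (the latter for the degenerate systems whose $H$ is eventually a fixed projection $P_{\tilde\beta}$), and conversely every element of $\overline{\mathbb H}$ comes from exactly one $H\in\V_+$. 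I equip $\overline{\mathbb H}$ with the topology of uniform convergence on compact subsets of $\C^+$, viewing its elements as holomorphic maps $\C^+\to\overline{\C^+}=\C^+\cup\R\cup\{\infty\}$; by Montel's theorem for such maps this topology is metrizable and $\overline{\mathbb H}$ is compact, hence Hausdorff, which is all the soft argument uses. So it remains to show that $H_n\to H$ weak-$*$ implies $m_{H_n}\to m_H$ locally uniformly on $\C^+$.

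For this I would fix $z\in\C^+$ and work with the transfer matrix $T_H(x,z)$, the solution of $J\,\partial_x T=zHT$ with $T(0,z)=I_2$. The first step, relegated to Appendix~A, is that weak-$*$ convergence $H_n\to H$ forces $T_{H_n}(x,z)\to T_H(x,z)$ uniformly for $(x,z)$ in compact subsets of $[0,\infty)\times\C^+$; this comes from the Volterra/iteration representation of $T$ together with a Gronwall estimate, using that $\operatorname{Tr}H_n\equiv 1$ bounds $H_n$ uniformly. The second step is Weyl-disk nesting: for each $L>0$ the admissible boundary values at $L$ form a disk $D_H(L,z)\subset\overline{\C^+}$ equal to the image of a fixed half-plane under the linear-fractional action of $T_H(L,z)$, these disks decrease with $L$, contain $m_H(z)$, and, because every trace-normed system is limit point at $\infty$, shrink to the single point $m_H(z)$. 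Given $\varepsilon>0$, choose $L$ with $\operatorname{diam}D_H(L,z)<\varepsilon$; since $D_{H_n}(L,z)$ depends only on the \emph{fixed} matrix $T_{H_n}(L,z)$, step one gives $D_{H_n}(L,z)\to D_H(L,z)$, hence $D_{H_n}(L,z)\subset\{|\zeta-m_H(z)|<3\varepsilon\}$ for large $n$; as $m_{H_n}(z)\in D_{H_n}(L,z)$ by the nesting for the system $H_n$, this yields $m_{H_n}(z)\to m_H(z)$ pointwise on $\C^+$. Finally, the $m_{H_n}$ being Herglotz functions or constants, they form a normal family, so Vitali's theorem upgrades pointwise to locally uniform convergence, giving $m_{H_n}\to m_H$ in $\overline{\mathbb H}$.

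I expect the main obstacle to be step one, the transfer-matrix continuity lemma, together with the bookkeeping in step two: the approximating disks $D_{H_n}(L,z)$ contract to points only as $L\to\infty$ and not uniformly in $n$, so the estimate must be arranged to fix $L$ from the \emph{limit} system $H$ first and only afterwards let $n\to\infty$.
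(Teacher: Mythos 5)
Your proposal is correct, and its overall architecture coincides with the paper's: bijectivity from the de Branges--Winkler correspondence, compactness of $\V_+$ plus the Hausdorff property of $\overline{\mathbb H}$ to reduce everything to continuity of $H\mapsto m_H$, and a solution-convergence lemma (your transfer-matrix step one is essentially the paper's Lemma \ref{convergenceforH}, proved there via a Neumann-series bound, equicontinuity and Arzel\`a--Ascoli rather than by iteration and Gronwall). Where you genuinely diverge is in how continuity is extracted from that lemma. The paper works with the $H_n$-integrable solutions $f_n=u_n+m_{H_n}v_n$ and the identity $\operatorname{Im}m_{H_n}(z)/\operatorname{Im}z=\int_0^\infty f_n^*H_nf_n$: after passing to a subsequence with $m_{H_n}\to m$, it shows the limit $f=u+mv$ is $H$-integrable and invokes the limit-point property to force $m=m_H$; this requires a separate and rather long analysis of the degenerate case $H=\big(\begin{smallmatrix}1&0\\0&0\end{smallmatrix}\big)$, where $m_H=\infty$ and the identity must instead be applied to $-1/m_{H_n}$. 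Your Weyl-disk nesting argument reaches the same conclusion more uniformly: provided you consistently measure the disks $D_H(L,z)\subset\overline{\C^+}$ and their diameters in the spherical metric on $\overline{\C}$ (so that, for instance, for $H\equiv\big(\begin{smallmatrix}1&0\\0&0\end{smallmatrix}\big)$ the half-planes $\{\operatorname{Im}w\ge L\operatorname{Im}z\}\cup\{\infty\}$ genuinely shrink to $\{\infty\}$), the cases $m_H\in\mathbb H$, $m_H\in\R$ and $m_H=\infty$ are handled by one and the same estimate, and your order of quantifiers --- fix $L$ from the limit system first, then let $n\to\infty$ --- is exactly the right one. The price is that you must set up, or cite, the Weyl-disk formalism for canonical systems (the nesting, the containment $m_{H_n}(z)\in D_{H_n}(L,z)$, the equivalence of limit point with shrinking disks, and the continuous dependence of the disk on the transfer matrix), which the paper avoids by using only the single identity above together with the uniqueness of the $H$-integrable solution.
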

It is well known that $\overline{\mathbb H}$ is compact with the uniform convergence on compact subsets of $\C^+$ which is a natural topology for Herglotz functions as analytic functions on $\C^+$. As discussed in Section \ref{secpre}, this map is a bijection by de Branges \cite{deB} and Winkler \cite{Win}. Since $\V_+$ is compact, it suffices to show that this map is continuous. Roughly speaking, this map should be continuous because of Weyl theory and the fact that $\textrm{Tr }H=1$ implies that (\ref{cs}) is in a limit point case at $\infty$. In other words, $H$ on $(0,L)$ for a sufficiently large number $L>0$ almost determines its $m$ function $m_H$.

 Not to miss a major picture, the proof of Proposition \ref{CovofH} is postponed to Appendix A, since it is quite long and the comment above is reasonable.\\

Let us also mention that an equivalent convergence for $H$ to (\ref{weak*conv}) was discussed in \cite{deB2}. More precisely, de Branges showed that, if $n\to\infty$, the convergence $m_{H_n}(z)\to m_{H}(z)$ holds locally uniformly on $C^+$ if and only if 
\begin{equation}\label{lucforH}
\int_0^x H_n(t) dt \to \int_0^x H(t)dt \quad \textrm{locally uniformly for } x\in [0,\infty)
\end{equation}
(see also proposition 3.2 in \cite{L&W}). In de Branges' version, $H_n$ do not need to be trace-normed, but the local uniform convergence is required as payment. Note that, due to the trace-normed condition, the weak-$\ast$ convergence in (\ref{weak*conv}) implies the local uniform convergence in (\ref{lucforH}), which reveals that two convergences are equivalent. In this paper, (\ref{weak*conv}) will be only discussed when proving Proposition \ref{CovofH}.\\ 

\section{Proof of Theorem \ref{Density}}\label{secpf}
In this section let us prove Theorem \ref{Density}. Based on the discussion (or the box) in the introduction, almost all the pieces have already been in our hands from the previous sections. The remaining is to construct Schr\"odinger canonical systems which converge to the trace-normed canonical system whose $m$ function is a given Herglotz function in the sense of the topology on canonical systems in the previous section.

To do this, observe that, since any symmetric matrix can be expressed by the sum of projections by the spectral theorem, it is at least locally and averagely  that $H$ is the sum of projection matrices. In other words, $H$ is $P_{\varphi}$ (which is (\ref{HforS})) with a nondecreasing step function $\varphi$ in the locally average sense (which will be clear). Then approximate such $\varphi$ by strictly increasing smooth functions in the $L^1$-sense. Due to this $L^1$-approximation $\alpha$ can be chosen  in (\ref{bcat0}) at our disposal. 

\begin{proof}[Proof of Theorem \ref{Density}]
Choose any function from $\overline{\mathbb H}$. By de Branges \cite{deB2} and Winkler \cite{Win}, there is a unique matrix $H$ in $\V_+$ such that the corresponding $m$ function $m_H$ is the given Herglotz function.\\ 

Let us first approximate $H$ by $H_n$ such that they are projection matrices $P_{\varphi_n}$  whose $\varphi_n$ are nondecreasing step functions.  
Given $n\in\N$ put $I_{j,n}:=[\frac{j}{2^n},\frac{j+1}{2^n})$ and $H_{j,n}:=2^n\int_{I_{j,n}}H(x)\textrm{ }dx$, where $j=0,1,2,\cdots$. Then $H_{j,n}$ are constant, positive semidefinite $2\times2$ matrices with $\textrm{Tr }H_{j,n}=1$. Since $H_{j,n}$ are symmetric, by the spectral theorem, there are some real numbers $\varphi_{j,n}$ such that 
\begin{equation*}
H_{j,n}=\lambda_{j,n}P_{\varphi_{j,n}}+(1-\lambda_{j,n})P_{\varphi_{j,n}+\frac{\pi}2}
\end{equation*}
where $\lambda_{j,n}$ are eigenvalues of $H_{j,n}$ and $P_{\varphi_{j,n}}$  are orthogonal projections onto the eigenspaces for $\lambda_{j,n}$. The projections for the other eigenvalues are $P_{\varphi_{j,n}+\frac{\pi}2}$ because of the orthogonality of eigenspaces of two eigenvalues. If there is only one eigenvalue $\lambda_{j,n}$, it is possible to choose two orthogonal vectors in its eigenspace, since its multiplicity is two.\\

Construct $\varphi_n$ by 
\begin{equation*}
\varphi_n(x) := \begin{cases} \varphi_{j,n} & \quad x\in [\frac{j}{2^n}, \frac{j+\lambda_{j,n}}{2^n}) \\  \varphi_{j,n}+\frac{\pi}2 & \quad x\in [\frac{j+\lambda_{j,n}}{2^n},\frac{j+1}{2^n}) \end{cases}
\end{equation*}
in such a way that $\varphi_{j+1,n}\ge \varphi_{j,n}+\frac{\pi}2$ for all $j$. Indeed, if $\varphi_{j+1,n}< \varphi_{j,n}+\frac{\pi}2$ for some $j$, then add the smallest multiple of $\pi$ to $\varphi_{j+1,n}$ in order to make $\varphi_n$ nondecreasing.  Do this process from $j=0$ inductively, and denote new values by $\varphi_{j+1,n}$ again for convenience.  This is fine because we later deal with only three quantities  $\cos^2\varphi_n$, $\sin^2\varphi_n$ and $\cos\varphi_n\sin\varphi_n$ in (\ref{HforS}) which are periodic with the period $\pi$. Then $\varphi_n$ are nondecreasing step functions. See Figure 1 below.\\

\begin{tikzpicture}[scale=0.65]
\draw[->] (0,0) -- (8.5,0) node[anchor=north] { \large{$x$}};
\draw (0,0) node[anchor=north] {0}
          (1.9,0) node[anchor=north] {$\frac{\lambda_{0,n}}{2^n}$}
          (3.5,0) node[anchor=north] {$\frac1{2^n}$}
          (5.5,0) node[anchor=north] {$\frac{1+\lambda_{1,n}}{2^n}$}
          (7,0) node[anchor=north] {$\frac2{2^n}$}

          (0,1.3) node[anchor=east] {$\varphi_{0,n}$}
          (0,2.3) node[anchor=east] {$\varphi_{0,n}$+$\frac{\pi}2$}
          (0,3.5) node[anchor=east] {$\varphi_{1,n}$}
          (0,4.5) node[anchor=east] {$\varphi_{1,n}$+$\frac{\pi}2$};
\draw[->] (0,0) -- (0,6) node[anchor=east] {\large{$\varphi_n$}}; 
\draw[dotted] (1.9,0) -- (1.9,6)
                        (3.5,0) -- (3.5,6)
                       (5.5,0) -- (5.5,6)
                       (7,0) -- (7,6);
\draw[very thick]  (0,1.3) -- (1.9,1.3)
          (1.9,2.3) -- (3.5,2.3)
          (3.5,3.5) -- (5.5,3.5)
          (5.5,4.5) -- (7,4.5)
          (7,5.7) -- (7.9,5.7);

\draw 
          (-0.03,2.3) -- (0.04,2.3)
          (-0.03,3.5) -- (0.04,3.5)
          (-0.03,4.5) -- (0.04,4.5);
\draw (4.2,-.9) node[anchor=north] {\textbf{Figure 1. Step functions $\varphi_n$}};
\end{tikzpicture}

\noindent
Put $H_n:=P_{\varphi_n}$. 
The definition of $\varphi_n$ then indicates that, for given $n_0\in\N$,
\begin{equation}
\label{sameaverage}
\int_{I_{j,n_0}}H_{n}=\int_{I_{j,n_0}}H
\end{equation}
for all $n\ge n_0$. 
 
We next prove that $H_n$  converges weak-$\ast$ to $H$ in the sense of (\ref{weak*conv}). Let $f$ be a continuous (vector-valued) function with support contained in $[0,L]$ for some positive number $L$. By the Lebesgue lemma, for given $\epsilon>0$, there are numbers $M$, $n_0$ and $f_{j,n_0}$ such that, for all $x\in[0,L]$,
\begin{equation*}
  \sup \| f(x) \| \leq M 
\end{equation*}
 and, for all $n\ge n_0$,
\begin{equation*}
  \sup \| f(x)-f_{j,n_0} \| \leq \frac{\epsilon}{2ML} \quad \textrm{ for all } x,y\in I_{j,n}.  
\end{equation*}
Let us estimate the contribution of $H_n-H$  on each small interval. First decompose it by 
\begin{eqnarray*}
\int_{I_{j,n_0}}f^*(H_n-H) f &=& \int_{I_{j,n_0}}(f-f_{j,n_0})^* (H_n-H) f \\
 &+ & \int_{I_{j,n_0}}f_{j,n_0}^* (H_n-H) (f-f_{j,n_0})  \\
 &+&  \int_{I_{j,n_0}}f_{j,n_0}^*(H_n-H) f_{j,n_0}. 
\end{eqnarray*}
By (\ref{sameaverage}) the third integral is zero for all $n\ge n_0$. Now that the operator norm of $H_n-H$ is bounded by 2 (each $H$ is bounded by 1 due to $\textrm{Tr }H=1$ and positive semidefiniteness of $H$), the absolute values of the first and second integrals are bounded by $\frac{\epsilon}{L2^{n_0}}$. Hence  the quantity $\big| \int_0^\infty f^*(H_n-H)f \big|$ is less than $2\epsilon$. Since $\epsilon$ is arbitrary, $H_n$ converges to $H$ weak-$\ast$.\\

We have so far constructed nondecreasing step functions $\varphi_n$ such that $H_n$ ($=P_{\varphi_n}$)  converges to $H$ weak-$*$. These $\varphi_n$ are, however, not the ones corresponding to some Schr\"odinger equations, since $\varphi_n$ are not differentiable, not linear with slope 1 near $0$, and not strictly increasing. 

To overcome these, for each $n$ let us construct new functions $\tilde{\varphi}_{m,n}$ in the following way. For convenience the subscript $n$ is dropped, and so $\varphi_n$ and $\tilde{\varphi}_{m,n}$ are denoted by $\varphi$ and $\tilde{\varphi}_m$, respectively. Assume that all the steps of the graph of $\varphi$ are bounded. This is OK because if an unbounded step exists, then it would be the last step and it can be considered as a singular interval. This singular interval can then be converted to some boundary condition at the starting point of the unbounded interval, as talked in the proof of Proposition \ref{Cor}. 

Except for the first step, all other steps of $\varphi$ are approximated by piecewise linear, strictly increasing, and continuous functions $\tilde{\varphi}_m$, so that $\tilde{\varphi}_m$ converges to $\varphi$ in the $L^1$-sense. Look at the thick piecewise linear function in Figure 2 below. For the first (bounded) step, assume that  $\varphi(0) > \alpha$, where $\alpha$ is chosen at our disposal in (\ref{bcat0}). This is possible because of  shifting $\varphi$ up by $\pi$ at no costs. Then make $\tilde{\varphi}_m$ start at $(0, \alpha)$ and slightly move up linearly with slope 1. Do the similar procedure for its remaining part of the first step as the other steps. See the Figure 2 below. Then $\tilde{\varphi}_m$ are strictly increasing, piecewise linear, continuous functions with $\tilde{\varphi}_m(0)=\alpha$ which look linear with slope 1 near 0. The problem is, however, that they may not be differentiable.\\ 

\begin{tikzpicture}[scale=0.65]
\draw[->] (0,0) -- (8.5,0) node[anchor=north] { \large{$x$}};
\draw (0,0) node[anchor=north] {0}
          (1.9,0) node[anchor=north] {$\frac{\lambda_{0,n}}{2^n}$}
          (3.5,0) node[anchor=north] {$\frac1{2^n}$}
          (5.5,0) node[anchor=north] {$\frac{1+\lambda_{1,n}}{2^n}$}
          (7,0) node[anchor=north] {$\frac2{2^n}$}

          (0,0.5)  node[anchor=east] {$\alpha$}
          (0,1.3) node[anchor=east] {$\varphi_{0,n}$}
          (0,2.3) node[anchor=east] {$\varphi_{0,n}$+$\frac{\pi}2$}
          (0,3.5) node[anchor=east] {$\varphi_{1,n}$}
          (0,4.5) node[anchor=east] {$\varphi_{1,n}$+$\frac{\pi}2$};
\draw[->] (0,0) -- (0,6) node[anchor=east] {\large{$\tilde\varphi_m$}}; 
\draw[dotted] (0.15,0) -- (0.15,6)
                        (1.9,0) -- (1.9,6)
                        (3.5,0) -- (3.5,6)
                       (5.5,0) -- (5.5,6)
                       (7,0) -- (7,6);
\draw[thin]  (0,1.3) -- (1.9,1.3)
          (1.9,2.3) -- (3.5,2.3)
          (3.5,3.5) -- (5.5,3.5)
          (5.5,4.5) -- (7,4.5)
          (7,5.7) -- (7.9,5.7);
\draw (-0.03,0.5) -- (0.04,0.5)
          (-0.03,2.3) -- (0.04,2.3)
          (-0.03,3.5) -- (0.04,3.5)
          (-0.03,4.5) -- (0.04,4.5);

\draw[very thick] (0,0.5) -- (0.15,0.65) -- (0.25,1.2) -- (1.8,1.4) -- (2.0,2.2)    
                    -- (3.4,2.4) -- (3.6,3.4) -- (5.4,3.6) -- (5.6,4.4) -- (6.9,4.6) -- (7.1,5.6) -- (7.9,5.72);
\draw (4.2,-0.9) node[anchor=north] {\textbf{Figure 2. Piecewise linear functions}};
\end{tikzpicture}

Mollifiers then enable us to make $\tilde{\varphi}_m$ smooth functions (and denote them by $\tilde\varphi_m$ again). Finally, the constructed functions $\tilde{\varphi}_m$ are smooth, strictly increasing functions which are linear with slope 1 near 0.  This means that $\tilde{\varphi}_m$ correspond to some Schr\"odinger equations by Proposition \ref{Cor}. See Figure 3 below.

\begin{tikzpicture}[scale=0.65]
\draw[->] (0,0) -- (8.5,0) node[anchor=north] { \large{$x$}};
\draw (0,0) node[anchor=north] {0}
          (1.9,0) node[anchor=north] {$\frac{\lambda_{0,n}}{2^n}$}
          (3.5,0) node[anchor=north] {$\frac1{2^n}$}
          (5.5,0) node[anchor=north] {$\frac{1+\lambda_{1,n}}{2^n}$}
          (7,0) node[anchor=north] {$\frac2{2^n}$}

          (0,0.5)  node[anchor=east] {$\alpha$}
          (0,1.3) node[anchor=east] {$\varphi_{0,n}$}
          (0,2.3) node[anchor=east] {$\varphi_{0,n}$+$\frac{\pi}2$}
          (0,3.5) node[anchor=east] {$\varphi_{1,n}$}
          (0,4.5) node[anchor=east] {$\varphi_{1,n}$+$\frac{\pi}2$};
\draw[->] (0,0) -- (0,6) node[anchor=east] {\large{$\tilde{\varphi}_m$}}; 
\draw[dotted] 
                        (1.9,0) -- (1.9,6)
                        (3.5,0) -- (3.5,6)
                       (5.5,0) -- (5.5,6)
                       (7,0) -- (7,6);
\draw[thin] (0,1.3) -- (1.9,1.3)
          (1.9,2.3) -- (3.5,2.3)
          (3.5,3.5) -- (5.5,3.5)
          (5.5,4.5) -- (7,4.5)
          (7,5.7) -- (7.9,5.7);
\draw (-0.03,0.5) -- (0.04,0.5)
          (-0.03,2.3) -- (0.04,2.3)
          (-0.03,3.5) -- (0.04,3.5)
          (-0.03,4.5) -- (0.04,4.5);

\draw[very thick, rounded corners]  (0,0.5) -- (0.15,0.65) -- (0.25,1.2) -- (1.8,1.4) -- (2.0,2.2)    
                    -- (3.4,2.4) -- (3.6,3.4) -- (5.4,3.6) -- (5.6,4.4) -- (6.9,4.6) -- (7.1,5.6) -- (7.9,5.72);
\draw (4.2,-0.9) node[anchor=north] {\textbf{Figure 3. Smooth functions}};
\end{tikzpicture}\\

Let us now rewrite the subscript $n$ which was dropped. In the above construction, observe that 
\begin{equation}
\label{L^1conv}
\| \tilde{\varphi}_{m,n}-\varphi_n \|_{L^1}\to 0
\end{equation}
as $m\to\infty$. 
Since sine and cosine are uniformly continuous, (\ref{L^1conv}) implies the weak-$*$ convergence of $\tilde{H}_{m,n}$ to $H_n$, where $\tilde{H}_{m,n}=P_{\tilde{\varphi}_{m,n}}$. Now that $H_n$ converges to $H$ weak-$*$, by Proposition \ref{CovofH},  $m_{\tilde{H}_{m,n}}$ converges to $m_H$ uniformly on compact subsets of $\C^+$ with suitably chosen $n$ and $m$. Theorem \ref{Density} has just been proven, since $m_{\tilde{H}_{m,n}}$ are Schr\"odinger $m$ functions which converge to the given Herglotz function, as desired.
\end{proof}

Remark. As talked in Sections \ref{secintro} and \ref{secDensity}, what has been seen is stronger than the main theorem; the proof above has showed that the Schr\"odinger $m$ functions with \textit{fixed $\alpha$} and \textit{smooth} potentials are dense in all Herglotz functions, since $\tilde{\varphi}_{m,n}(0)=\alpha$ at all times and $\tilde{\varphi}_{m,n}$  are all smooth, that is,  the corresponding potentials are smooth by Proposition \ref{Cor}.\\ 

\appendix
\section{Proof of Proposition \ref{CovofH}}
To be self-contained we are going to prove Proposition \ref{CovofH} by following the argument in \cite{Achrt}.  The extension of his argument from $\mathbb H$ to $\overline{\mathbb H}$, however, is necessary, especially to deal with $\infty$. As a cost of this extension, the proof becomes much longer than that in \cite{Achrt}.\\

Let's start with the following lemma. 
\begin{Lemma}
\label{convergenceforH}
Assume that $H_n$ converges to $H$ weak-$*$ in the sense of (\ref{weak*conv}), as $n\to\infty$, and let $u_n$ be solutions to (\ref{cs}) with $H_n$ such that the initial values $u_n(0)$ are the same for all $n$. Then the sequence $u_n$ has a subsequence which converges uniformly on any compact subsets of the half line $[0,\infty)$. Moreover, if $u$ is such a limit, then $u$ satisfies (\ref{cs}) with $H$. 
\end{Lemma}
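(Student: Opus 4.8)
The plan is to reformulate the canonical system as an integral equation and apply a compactness argument of Arzelà--Ascoli type. First I would rewrite \eqref{cs} with $H_n$ in integrated form: since $Ju_n'(x,z)=zH_n(x)u_n(x,z)$ with $u_n(0,z)$ fixed independently of $n$, applying $J^{-1}=-J$ gives
\begin{equation*}
u_n(x,z)=u_n(0,z)-zJ\int_0^x H_n(s)u_n(s,z)\,ds.
\end{equation*}
From this representation two things follow. Because $\mathrm{Tr}\,H_n(s)=1$ and $H_n(s)\ge 0$, the operator norm of $H_n(s)$ is at most $1$, so a Gronwall-type estimate yields a bound $\|u_n(x,z)\|\le \|u_n(0,z)\|e^{|z|x}$ that is uniform in $n$ on each compact subset of $[0,\infty)$; in particular the family $\{u_n\}$ is uniformly bounded on compacts. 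Equicontinuity follows from the same integral identity: for $x_1<x_2$ in a compact set, $\|u_n(x_2)-u_n(x_1)\|\le |z|\int_{x_1}^{x_2}\|H_n(s)\|\,\|u_n(s)\|\,ds\le C|x_2-x_1|$ with $C$ uniform in $n$. Hence by Arzelà--Ascoli there is a subsequence (not relabeled) converging uniformly on compact subsets of $[0,\infty)$ to some continuous $u$.

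Next I would identify the limit $u$ as a solution of \eqref{cs} with $H$. The point is to pass to the limit in the integral equation. Fix $x$; I must show $\int_0^x H_n(s)u_n(s,z)\,ds\to\int_0^x H(s)u(s,z)\,ds$. Split this as
\begin{equation*}
\int_0^x H_n(s)\bigl(u_n(s,z)-u(s,z)\bigr)\,ds+\int_0^x H_n(s)u(s,z)\,ds-\int_0^x H(s)u(s,z)\,ds.
\end{equation*}
The first term is bounded by $(\sup_{[0,x]}\|u_n-u\|)\int_0^x\|H_n(s)\|\,ds\le x\sup_{[0,x]}\|u_n-u\|\to 0$. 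For the remaining difference I want to use the weak-$*$ convergence \eqref{weak*conv}; the mild obstacle is that $u(\cdot,z)$ is only continuous (which is fine for \eqref{weak*conv}) but the test "function" is $\chi_{[0,x]}$ times $u$, which is not compactly supported in the open half-line and not continuous at $x$. I would handle this by approximating $\chi_{[0,x]}u$ uniformly by continuous compactly supported vector functions $f_\varepsilon$ (e.g. smoothing the cutoff) and using the uniform operator-norm bound $\|H_n\|\le 1$ to control the error $\int|\chi_{[0,x]}u-f_\varepsilon|$ uniformly in $n$; then \eqref{weak*conv} applies to each fixed $f_\varepsilon$. Letting first $n\to\infty$ then $\varepsilon\to 0$ gives the convergence. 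Consequently $u(x,z)=u(0,z)-zJ\int_0^x H(s)u(s,z)\,ds$, so $u$ is absolutely continuous and differentiating recovers $Ju'=zHu$; the initial value $u(0,z)$ is the common value since each $u_n(0,z)$ equals it and convergence is uniform near $0$.

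I expect the main obstacle to be precisely this last passage to the limit, i.e. justifying $\int_0^x H_n u\to\int_0^x Hu$ from the weak-$*$ hypothesis when the natural test vector $\chi_{[0,x]}u$ is neither continuous nor compactly supported inside $(0,\infty)$; the resolution is the density/approximation argument above, leveraging the uniform bound $\|H_n(s)\|\le 1$ coming from the trace-normalization and positive semidefiniteness. Everything else — the Gronwall bound, equicontinuity, and the Arzelà--Ascoli extraction — is routine. (If one wants convergence of the full sequence rather than a subsequence one would additionally invoke uniqueness of solutions of \eqref{cs} with given initial data, but the lemma as stated only claims a convergent subsequence, so this is not needed here.)
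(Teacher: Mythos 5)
Your proposal is correct and follows essentially the same route as the paper: rewrite the system as the integral equation $u_n(x)=u_n(0)-zJ\int_0^x H_nu_n$, get uniform boundedness and equicontinuity from $\|H_n\|\le 1$ (the paper uses a Neumann-series bound on small intervals where you use Gronwall, an immaterial difference), extract a subsequence by Arzel\`a--Ascoli, and pass to the limit by splitting $\int_0^x H_nu_n$ into $\int_0^x H_n(u_n-u)$ plus $\int_0^x(H_n-H)u$. Your explicit approximation of the test vector $\chi_{[0,x]}u$ by continuous compactly supported functions, with the error controlled uniformly in $n$ via $\|H_n\|\le 1$, is exactly the justification behind the paper's brief remark that the weak-$*$ convergence ``works fine with characteristic functions'' because the limit measure $H(t)\,dt$ has no point masses.
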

\begin{proof}
Let us first see that $u_n$ are uniformly bounded on any bounded intervals in $[0,\infty)$, and then the sequence $u_n$ converges in a subsequence uniformly on any compact subsets. Given a positive number $R$,  assume that $|z|<R$ and put $\eta=\frac1{2R}$. Define operators $T_n$ from $C[0,\eta]$, the set of continuous (vector-valued) functions on $[0,\eta]$, to itself by
\begin{equation*}
(T_nu)(x)=-z\int_0^xJH_n(t)u(t)dt.
\end{equation*}
Then $||T_n||\leq 1/2$. Indeed, since $J$ is unitary, the trace-normed condition $\textrm{Tr }H=1$ and positive semidefiniteness imply that $\| JH_n \| \leq 1$ and  
\begin{equation}
\label{estT}
||T_n||
 = \textrm{ sup}_{||u||_{\infty}=1} \| T_nu(x) \| \leq  R\textrm{ }|x| \leq 1/2  
\end{equation}
for $x\in[0,\eta]$. 
In other words, $T_n$ are uniformly bounded in $n$. Now that the Neumann series converges and $u_n$ are solutions to (\ref{cs}) with $H_n$,  
\begin{equation}
\label{Neumann}
(I-T_n)^{-1}=\sum_{k=0}^{\infty}T_n^k,
\end{equation}
and 
\begin{equation}
\label{slnu_n}
u_n(x)-u_n(0)=(T_nu_n)(x), \textrm{ or} \quad u_n(x)=(I-T_n)^{-1}u_n(0). 
\end{equation}
Then (\ref{estT}) and (\ref{Neumann}) say that $|| (I-T_n)^{-1} ||\leq 2$, which implies that $u_n$ are uniformly bounded in $n$ on any bounded subsets of $[0,\infty)$. Due to (\ref{slnu_n}), $u_n$ are equicontinuous. By Arzela-Ascolli Theorem, the sequence $u_n$ has a subsequence which converges uniformly on compact subsets of $[0,\infty)$, say, $u_n\to u$, as $n\to\infty$. For convenience, we keep the same notation $u_n$ for the subsequence.

Let us see that $u$ satisfies (\ref{cs}) with $H$. Similar to (\ref{slnu_n}), observe that 
\begin{equation*}
u_n(x)-u_n(0)=-z\int_0^xJH_n(t)u_n(t)dt.
\end{equation*}
The left-hand side goes to $u(x)-u(0)$, as $n\to\infty$, by continuity. Due to splitting the integral on the right-hand side by  
\begin{eqnarray*}
&  & \int_0^xJH_n(t)u_n(t)dt \\
&=& \underbrace{ \int_0^xJH_n(t)\big(u_n(t)-u(t)\big)dt }_{=: I}
+\underbrace{\int_0^xJ\big( H_n(t)-H(t)\big)u(t)dt}_{=: II}\\
&  & +\int_0^xJH(t)u(t)dt
\end{eqnarray*}
it suffices to show that the first two integrals $I$ and $II$ go to zero as $n\to\infty$. First, $I\to 0$ because $JH_n(t)dt$ are finite measures on $[0,x]$, and the sequence $u_n$ converges to $u$ uniformly on $[0,x]$. To see that $II\to 0$, recognize our test function through  
\begin{equation*}
II=\int_0^{\infty}J\big( H_n(t)-H(t)\big)\chi_{[0,x]}(t)u(t)dt. 
\end{equation*}  
Since $H(t)dt$ is absolutely continuous with respect to the Lebesgue measure and, in particular, $H(t)dt$ does not have point masses, the weak-$*$ convergence works fine with any characteristic functions $\chi_{I}$ for any bounded interval $I$. Therefore $II$ goes to zero, as $n\to \infty$. This tells us that $u$ satisfies the equation 
\begin{equation*}
u(x)-u(0)=-z\int_0^xJH(t)u(t)dt, 
\end{equation*}
that is, $u$ is a solution to (\ref{cs}) with $H$, by the uniform convergence of $u_n$ on any compact subsets of $[0,\infty)$ and the fact that any solution to (\ref{cs}) is absolutely continuous \cite{KL,Win}.\\  
\end{proof}
We now prove Proposition \ref{CovofH}. 
\begin{proof}[Proof of Proposition \ref{CovofH}]
Choose a sequence $  H_n $ which converges to $H$ weak-$*$, as $n\to\infty$. Assume first that $H$ not be the same as the constant matrix $\big( \begin{smallmatrix} 1 & 0 \\ 0 & 0 \end{smallmatrix}\big)$ on $(0,\infty)$ and that $m_{H_n}$ not converge to $\infty$, even in a subsequence.  This case is discussed first because it gives us the basic idea how to prove Proposition \ref{CovofH}. It will be shown later that  $H_n$ converges weak-$*$ to  the constant matrix $\big( \begin{smallmatrix} 1 & 0 \\ 0 & 0 \end{smallmatrix}\big)$ on $(0,\infty)$ if and only if $m_{H_n}$ converges to $\infty$. Hence it would be enough to assume only that $H$ not be the constant matrix $\big( \begin{smallmatrix} 1 & 0 \\ 0 & 0 \end{smallmatrix}\big)$ here.

It is well known that Weyl theory can be applied to a canonical system in a similar way to that for Schr\"odinger operators (see e.g.  \cite{Achrt}).  More precisely, it is possible to choose $f_n=( f_{n,1} ,f_{n,2})^t$ which are $H_n$-integrable solutions to (\ref{cs})  (i.e., $f_n$ satisfy $\int_0^{\infty}f_n^*H_nf_n<\infty$) such that, for $z\in\C^+$, 
\begin{equation}
\label{L2slnform}
f_n(x,z)=u_n(x,z)+m_{H_n}(z)v_n(x,z)
\end{equation}
and  
\begin{equation}
\label{ImwithH}
\frac{\textrm{Im }m_{H_n}(z)}{\textrm{ Im }z}=\int_0^{\infty}f^*_n(x,z)H_n(x)f_n(x,z)dx  
\end{equation}
where $m_{H_n}$ are the $m$ functions of (\ref{cs}) with $H_n$, and $u_n$ and $v_n$ are the solutions for $H_n$ satisfying the initial conditions $u_n(0)=\big( \begin{smallmatrix} 1 \\ 0 \end{smallmatrix}\big)$ and $v_n(0)=\big( \begin{smallmatrix} 0 \\ 1 \end{smallmatrix}\big)$.
Due to (\ref{L2slnform}) and the initial values of $u_n$ and $v_n$, $m_{H_n}$ are expressed by 
\begin{equation}
\label{expform}
m_{H_n}(z)=\frac{f_{n,2}(0,z)}{f_{n,1}(0,z)}
\end{equation}
which is (\ref{mfnforcs}). 

The sequence $f_n$ then has a convergent subsequence. Indeed, the compactness of $\overline{\mathbb H}$ implies that $m_{H_n}$ has a convergent subsequence, say, $m_{H_n}(z)\to m(z)(\neq\infty)\in\overline{\mathbb{H}}$, uniformly on compact subsets of $\C^+$ (for convenience we use the same notation for a subsequence). Since  $H_n$ converges to $H$ weak-$*$,  Lemma \ref{convergenceforH} tells us that 
\begin{equation*}
u_n(x,z)\to u(x,z), \textrm{ and } v_n(x,z)\to v(x,z)
\end{equation*}
in subsequences uniformly on compact subsets in $x$ and $z$, and $u$ and $v$ are the solutions to (\ref{cs}) with $H$ satisfying  $u(0)=\big( \begin{smallmatrix} 1 \\ 0 \end{smallmatrix}\big)$ and $v(0)=\big( \begin{smallmatrix} 0 \\ 1 \end{smallmatrix}\big)$. 
By (\ref{L2slnform}), $f_n$ converges in a subsequence, say to $f$ which is written by  
\begin{equation}
\label{expform2}
f(x,z):=u(x,z)+m(z)v(x,z). 
\end{equation}

It is sufficient to show that $f$ is $H$-integrable. Because, if $f$ is $H$-integrable, the similar formula to (\ref{expform}) for $f$ and $m_H$ and (\ref{expform2}) indicate that  
\begin{equation*}
m(z)=\frac{f_{2}(0,z)}{f_{1}(0,z)}=m_{H}(z),
\end{equation*}
which says that $m_H$ is the only possible limit. This is followed by the fact that a trace-normed canonical system is always in a limit point case at $\infty$. Therefore  $m_{H_n}$ converges to $m_H$ uniformly on compact subsets of $\C^+$. 

Let's see $H$-integrability of $f$. Since $m(z)\neq\infty$, due to (\ref{ImwithH}), the sequence $\int_0^{\infty}f_n^*H_nf_n$ converges to some nonnegative number, as $n\to\infty$.  In particular,  the quantities $\int_0^{\infty}f_n^*H_nf_n$ are uniformly bounded for $n$, say, by $M$. Let us then see that, for all $n$,  
\begin{eqnarray*}
M &\ge& \int_0^{\infty}f_n^*H_nf_n \\
   &\ge& \int_0^Lf_n^*H_nf_n \quad \textrm{ for all positive number } L\\
   &=& \int_0^L(f_n-f)^*H_nf_n+\int_0^Lf_n^*H_n(f_n-f)+\int_0^Lf^*H_nf.
\end{eqnarray*}
Since $f_n$ converges to $f$ locally uniformly and $H(x)dx$ is absolutely continuous with respect to the Lebesgue measure, the weak-$*$ convergence of $H_n$ says that the first and second integrals go to zero and the third converges to $\int_0^Lf^*Hf$, as $n\to\infty$. By taking $L\to\infty$, $f$ is $H$-integrable, for these inequalities are all uniform in both $n$ and $L$.\\

 So far it has been showed that, if $H_n$ converges to $H$ weak-$\ast$, then $m_{H_n}$ converges to $m_H$, when the limit matrix $H$ is not the constant matrix $\big(\begin{smallmatrix} 1 & 0 \\ 0 & 0 \end{smallmatrix}\big)$ and $m_{H_n}$ does not converge to $\infty$, even in a subsequence.

It is the time to deal with the special case when the limit matrix $H$ is $\big( \begin{smallmatrix} 1 & 0 \\ 0 & 0 \end{smallmatrix}\big)$ on $(0,\infty)$, denoted by $H_{\infty}$, since its corresponding $m$ function is $\infty$. Let us show that $H_n$ converges to $H_{\infty}$ weak-$*$ if and only if $m_{H_n}$ converges to $\infty$. Before proving, note that, because of this equivalence, it was enough to assume that $H_n$ did converge to $H$ which was not $H_{\infty}$ in the previous case, as talked before. 

Observe that 
\begin{equation}
\label{zeroofv}
\int_0^L v^*Hv \textrm{ } dx = 0 \quad\textrm{if and only if}\quad H=H_{\infty} \textrm{ on } (0,L)
\end{equation}
where $v$ is the solution to (\ref{cs}) with $H$ satisfying $v(0)=\big( \begin{smallmatrix} 0 \\ 1 \end{smallmatrix}\big)$. 
Indeed, for any open interval $I$ in $(0,\infty)$ it is well known to have either that 
\begin{equation*}
\int_I e^*H e\textrm{ } dx=0,\textrm{ }  e\in\C^2 \textrm{ implies that } e=\big( \begin{smallmatrix} 0 \\ 0 \end{smallmatrix}\big)
\end{equation*}
(i.e., $I$ is \textit{of positive type} in \cite{HSW}),  
or that $I$ is a singular interval of type $\theta$, in other words, $H$ is the constant matrix $P_{\theta}$ 
\begin{equation}
\label{singularinterval}
P_{\theta}:=
\begin{pmatrix} \cos^2\theta& \cos\theta \sin \theta \\ \cos \theta \sin \theta & \sin^2\theta \end{pmatrix}
\end{equation}
almost everywhere on $I$ for some $\theta$ in $[0,\pi)$ (i.e., $I$ is \textit{$H$-invisible of type $\theta$} in \cite{HSW}). See lemma 3.1 in \cite{HSW} for more details.
Because of the continuity of $v$ and the initial condition $v(0)=\big( \begin{smallmatrix} 0 \\ 1 \end{smallmatrix}\big)$, if $\int_0^L v^*Hv \textrm{ } dx = 0$, then $\theta=0$ on $(0,L)$, which shows the sufficiency of (\ref{zeroofv}). Since $v_{\infty}(x)= \big( \begin{smallmatrix} 0 \\ 1 \end{smallmatrix}\big)$ on $(0,L)$, where $v_{\infty}$ is the solution to (\ref{cs}) with $H_{\infty}$ and the same initial condition of $v$, we have the necessity. 

Let us verify that,  if $H_n$ converges to $H_{\infty}$ weak-$*$, then $m_{H_n}$ converges to $\infty$. Suppose that $m_{H_n}$ does not converge to $\infty$. By the compactness we can choose $m\in\overline{\mathbb{H}}\setminus \{ \infty \} $ such that $m_{H_n}$ converges to $m$ at least in a subsequence. (For convenience we keep the same notation for the subsequence.)  Due to (\ref{L2slnform}) and (\ref{ImwithH}), see that, for any finite number $L>0$, 
\begin{eqnarray*}
\frac{\textrm{Im } m_{H_n}}{\textrm{ Im }z}
&=&\int_0^{\infty}f_n^*H_nf_n\\
&\ge& \int_0^{L}f_n^*H_nf_n\\
&=& \int_0^{L} u_n^* H_n u_n+m_{H_n}\int_0^{L} u_n^* H_n v_n\\
&& +\bar{m}_{H_n}\int_0^{L} v_n^* H_n u_n+|m_{H_n}|^2\int_0^{L} v_n^* H_n v_n.
\end{eqnarray*}
By taking $n\to\infty$, Lemma \ref{convergenceforH} tells that, at least in a subsequence, 
\begin{eqnarray*}
\frac{\textrm{Im } m}{\textrm{ Im }z}
&\ge&  \int_0^{L} u_{\infty}^* H_{\infty} u_{\infty}+m \int_0^{L} u_{\infty}^* H_{\infty} v_{\infty}\\
&& +\bar{m}\int_0^{L} v_{\infty}^* H_{\infty} u_{\infty}+|m|^2\int_0^{L} v_{\infty}^* H_{\infty} v_{\infty}\\
&=&\int_0^L 1 \textrm{ }dx
\end{eqnarray*}
for any finite number $L>0$, where $u_{\infty}$ and $v_{\infty}$ are the solutions for $H_{\infty}$ satisfying $u_{\infty}(0)=\big( \begin{smallmatrix} 1 \\ 0 \end{smallmatrix}\big)$ and $v_{\infty}(0)=\big( \begin{smallmatrix} 0 \\ 1 \end{smallmatrix}\big)$. Since the transfer matrix is $I_2-zxJP_0=\big(\begin{smallmatrix} 1&0\\-zx&1 \end{smallmatrix}\big)$ for $x>0$, direct computation says that $u_{\infty}(x)=\big( \begin{smallmatrix} 1 \\ -zx \end{smallmatrix}\big)$ and $v_{\infty}(x)=\big( \begin{smallmatrix} 0 \\ 1 \end{smallmatrix}\big)$, which implies the last equality. Due to choosing $L$ freely, the last integral, $\int_0^L 1\textrm{ } dx=L$, can be arbitrarily large, which contradicts that the left-hand side, $\frac{\textrm{Im } m}{\textrm{ Im }z}$, is a finite number which is independent of $L$. Therefore,  if $H_n$ converges to $H_{\infty}$ weak-$*$, then $m_{H_n}$ converges to $\infty$.

The remaining is to show that, if $m_{H_n}$ converges to $\infty$,  then $H_n$ converges to $H_{\infty}$ weak-$*$. Put $\tilde{f}_n=-\tilde{m}_{H_n}u_n+v_n$ where $\tilde{m}_{H_n}(z)=-\frac1{m_{H_n}(z)}$. In other words, since $m_{H_n}\to\infty$, (\ref{L2slnform}) cannot be used directly. Let us, instead, consider the negative reciprocals of $m_{H_n}$. Indeed, similar to (\ref{ImwithH}), it satisfies the equality 
\begin{equation}
\label{ImwithH2}
\frac{\textrm{Im }\tilde{m}_{H_n}(z)}{\textrm{ Im }z}=\int_0^{\infty}\tilde{f}^*_n(x,z)H_n(x)\tilde{f}_n(x,z)dx.
\end{equation}
For any finite number $L>0$, see that 
\begin{eqnarray*}
\frac{\textrm{Im }\tilde{m}_{H_n}}{\textrm{ Im }z} 
&=& \int_0^{\infty}\tilde{f}_n^* H_n\tilde{f}_n\\
&\ge& \int_0^{L}\tilde{f}_n^* H_n\tilde{f}_n\\
&=& |\tilde{m}_{H_n}|^2\int_0^{L} u_n^* H_n u_n-\overline{\tilde{m}}_{H_n}\int_0^{L} u_n^* H_n v_n\\
&& -\tilde{m}_{H_n}\int_0^{L} v_n^* H_n u_n+\int_0^{L} v_n^* H_n v_n.
\end{eqnarray*}
Since $\tilde{m}_{H_n} \to 0$, the left hand side on (\ref{ImwithH2}) converges to $0$. The compactness of $\V_+$ and Lemma \ref{convergenceforH} indicate that, for $L>0$, 
\begin{equation*}
\int_0^{L}\tilde{f}^*_n H_n\tilde{f}_n \rightarrow \int_0^L v^*Hv
\end{equation*} 
as $n\to \infty$, at least in a subsequence. Hence, whenever $H_n$ converges to $H$ weak-$*$ in a subsequence, we have shown that 
\begin{equation*}
\int_0^L v^*Hv=0
\end{equation*}
for any $L>0$, which, with (\ref{zeroofv}), implies that $H=H_{\infty}$ on $(0,\infty)$. Everything has then been done, since $H_{\infty}$ is the only possible limit.\\ 
\end{proof}

\textit{Acknowledgement.} I am grateful to Christian Remling for useful discussion in my  work and for all the help and support he has given me. It is also a pleasure to thank Matt McBride for important suggestions and friendly encouragement.  This research was partly supported by the National Research Foundation of Korea(NRF) grant funded by the Korea government(MOE) (No. 2014R1A1A2058848).\\

\end{document}